\newcommand\SL{\operatorname{SL}}
\newcommand\GL{\operatorname{GL}}
\newcommand\Aut{\operatorname{Aut}}
\newcommand\Out{\operatorname{Out}}
\newcommand\Comm{\operatorname{Comm}}
\newcommand\MCG{\operatorname{MCG}}
\newcommand\Z{\mathbb Z}
\newcommand\id{\operatorname{id}}
\newcommand\Stab{\operatorname{Stab}}
\renewcommand\index[2]{|#1:#2|}
\newcommand\subindex[2]{|#1:\,:#2|}
\newcommand\restrict[1]{\!\downharpoonleft_{\!#1}}
\newtheorem{lem}{Lemma}[section]
\newtheorem{thm}[lem]{Theorem}
\newtheorem{prop}[lem]{Proposition}
\newtheorem{mainthm}{Theorem}
\newtheorem{maincor}[mainthm]{Corollary}
\begin{document}
\title{On abstract commensurators of groups} \author{L. Bartholdi}
\address[L.B.]{Mathematisches Institut\\
  Bunsenstra\ss e 3--5\\
  G.-A. Universit\"at zu G\"ottingen\\
  37073 G\"ottingen\\
  Germany} \email{laurent.bartholdi@gmail.com} \author{O. Bogopolski}
\address[O.B.]{Institute of Mathematics\\
  Siberian Branch of the Russian Academy of Sciences\\
  Novosibirsk\\
  Russia, \textup{and}}
\address{Mathematisches Institut der H.-H. Universit\"at D\"usseldorf\\
  Universit\"atsstrasse 1\\
  40225 D\"usseldorf\\
  Germany}
\email{Oleg\_Bogopolski@yahoo.com}
\date{February 25, 2009}
\begin{abstract}
  We prove that the abstract commensurator of a nonabelian free group,
  an infinite surface group, or more generally of a group that splits
  appropriately over a cyclic subgroup, is not finitely generated.

  This applies in particular to all torsion-free word-hyperbolic
  groups with infinite outer automorphism group and abelianization of
  rank at least 2.

  We also construct a finitely generated, torsion-free group which can
  be mapped onto $\Z$ and which has a finitely generated
  commensurator.
\end{abstract}
\maketitle

\section{Introduction}

Let $G$ be a group. Consider the set $\Omega (G)$ of all isomorphisms
between subgroups of finite index of $G$. Two such isomorphisms
$\varphi_1:H_1\to H_1'$ and $\varphi_2:H_2\to H_2'$ are called
\emph{equivalent}, written $\varphi_1\sim\varphi_2$, if there exists a
subgroup $H$ of finite index in $G$ such that both $\varphi_1$ and
$\varphi_2$ are defined on $H$ and $\varphi_1\restrict
H=\varphi_2\restrict H$.

For any two isomorphisms $\alpha:G_1\to G_1'$ and $\beta:G_2\to G_2'$
in $\Omega(G)$, we define their product
$\alpha\beta:\alpha^{-1}(G_1'\cap G_2)\to \beta(G_1'\cap G_2)$ in
$\Omega (G)$. The factor-set $\Omega(G)/{\sim}$ inherits the
multiplication $[\alpha][\beta]=[\alpha\beta]$ and is a group, called
the \emph{abstract commensurator} of $G$ and denoted $\Comm(G)$.

$\Comm(G)$ is in general much larger than $\Aut(G)$. For example
$\Aut(\Z^n) \cong \GL(n,\Z)$ whereas $\Comm(\Z^n)
\cong \GL(n,\mathbb Q)$. Margulis proved that an irreducible lattice
$\Lambda$ in a semisimple Lie group $G$ is arithmetic if and only if
it has infinite index in its \emph{relative commensurator in $G$},
$$\Comm_G(\Lambda):=\{g\in G : g\Lambda g^{-1} \cap \Lambda \hspace*{2mm}{\text{\rm has finite index in both}}\hspace*{2mm} \Lambda\hspace*{2mm} {\text{\rm and}}\hspace*{2mm} g\Lambda g^{-1}\}.$$
`Mostow-Prasad-Margulis strong rigidity' for irreducible lattices
$\Lambda$ in $G\neq \SL (2,\mathbb R)$ implies that
the abstract commensurator $\Comm(\Lambda)$ is isomorphic to the
commensurator of $\Lambda$ in $G$, which in turn is computed
concretely by Margulis and Borel-Harish-Chandra; see
e.g.~\cite{Ma,Zi}. Analogously, for many groups acting on rooted
trees, their abstract commensurator equals their relative
commensurator in the automorphism group of the tree~\cite{Ro}.

Few abstract commensurators were explicitly computed. The group
$\Comm(\MCG_g)$ was computed for surface mapping class groups $\MCG_g$
by Ivanov~\cite{Iv}.  Farb and Handel proved in~\cite{FH} that
$\Comm(\Out(F_n))\cong \Out(F_n)$ for $n\geq 4$.  Leininger and
Margalit~\cite{LM} computed the abstract commensurator of the braid
group $B_n$ on $n\geq 4$ strings: $\Comm(B_n)\cong
(\mathbb{Q}^{\infty}\rtimes \mathbb{Q}^{\ast})\rtimes \MCG_{0,n+1}$,
where $\MCG_{0,n+1}$ is the mapping class group of the sphere with
$n+1$ punctures.
%Note that the groups $\Out(\MCG_g)$,
%$\Out(\Out(F_n))$ and $\Out(B_n)$ are finite, see
%respectively~\cite{Iv}, \cite{Kh}, and~\cite{Dy}.

Clearly, if $G$ is finitely generated, then $\Comm(G)$ is countable.
We show that, in many cases, it may be `large' in the sense that it is
not finitely generated. The cases we consider are groups $G$ which
split into an amalgamated product or an HNN extension over $1$ or
$\Z$, and satisfy some technical assumptions (see
Theorems~\ref{lem:free},~\ref{thm:HNN} and~\ref{thm:freeprod}).
%Note that $\Out(G)$
%is infinite for these groups (Proposition~\ref{prop:out}).
We deduce for example

\begin{maincor}\label{cor:surf}
  Let $G$ be either a non-Abelian free group, or a surface group
  $\pi_1(S)$ where $S$ is a closed surface of negative Euler
  characteristic. Then $\Comm(G)$ is not finitely generated.
\end{maincor}

\noindent Using a result by Paulin~\cite{Pau}, we deduce the more general
\begin{maincor}\label{cor:2}
  Let $G$ be a torsion-free word-hyperbolic group with infinite
  $\Out(G)$; suppose that $G$ can be homomorphically mapped onto
  $\Z\times\Z$.  Then $\Comm(G)$ is not finitely generated.
\end{maincor}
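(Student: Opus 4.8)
The plan is to reduce Corollary~\ref{cor:2} to Theorems~\ref{lem:free}, \ref{thm:HNN} and~\ref{thm:freeprod} by exhibiting inside $G$ a splitting of the kind those theorems require. First I would invoke Paulin's theorem~\cite{Pau}: since $G$ is word-hyperbolic with $\Out(G)$ infinite, $G$ admits a non-trivial isometric action on an $\mathbb R$-tree with virtually cyclic arc stabilizers; running this action through the Rips machine (Bestvina--Feighn) produces a non-trivial, reduced splitting of $G$ as an amalgamated product or an HNN extension over a virtually cyclic subgroup $C$. Because $G$ is torsion-free, $C$ is either trivial or infinite cyclic. Note moreover that $G$ is not virtually cyclic --- otherwise $\Out(G)$ would be finite --- so the splitting is genuinely non-degenerate (in the amalgam case both indices are at least $2$), and when $C\cong\Z$ the edge group $C$ cannot be normal in $G$, since a hyperbolic group with an infinite cyclic normal subgroup is virtually cyclic.

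Next I would carry out the case analysis dictated by the type of the splitting. If $C=1$, then $G=A*B$ non-trivially; freely indecomposable torsion-free free factors are either infinite cyclic or one-ended (Stallings), so the vertex groups carry enough structure, and Theorem~\ref{thm:freeprod} applies --- with Theorem~\ref{lem:free} handling the purely free case. If $C\cong\Z$, then $G$ is an amalgam $A*_C B$ or an HNN extension $A*_C$, and the relevant statement is Theorem~\ref{thm:HNN} together with its amalgam counterpart. In every case what remains is to check the technical hypotheses of the quoted theorem; the crucial one is the existence of a homomorphism $G\to\Z$ suitably compatible with the splitting, for instance one that is non-trivial on a vertex group while killing (or controlling) a generator of the edge group $C$, and that moreover survives in the tower of finite-index subgroups used to manufacture infinitely many pairwise inequivalent elements of $\Comm(G)$.

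This compatibility requirement is exactly where the hypothesis that $G$ surjects onto $\Z\times\Z$ is spent: it provides a two-dimensional family of homomorphisms $G\to\Z$, from which one selects a member in general position with respect to the edge and vertex groups of the splitting handed to us by Paulin. I expect this selection to be the main obstacle, since Paulin's theorem gives \emph{some} cyclic splitting with no a priori control over how it meets a prescribed surjection onto $\Z\times\Z$; the argument must rule out the degenerate configurations (every quotient $G\to\Z$ factoring through $\Z\times\Z$ being trivial on the relevant vertex group, or never trivial on $C$) using malnormality-type properties of the cyclic edge group in the hyperbolic group $G$ together with the non-elementary, one-ended structure of the factors. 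Once such a homomorphism is produced, Theorems~\ref{lem:free}, \ref{thm:HNN} and~\ref{thm:freeprod} immediately give that $\Comm(G)$ is not finitely generated, completing the proof.
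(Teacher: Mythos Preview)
Your overall plan---Paulin's theorem, then a case split into free product / HNN / amalgam over $\Z$, then invoke Theorems~\ref{lem:free}, \ref{thm:HNN}, \ref{thm:freeprod} respectively---is exactly the paper's proof. Two corrections, though.

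First, you have the theorems swapped in the $C=1$ case: Theorem~\ref{thm:freeprod} requires the edge group to be infinite cyclic, so it does \emph{not} apply when $C=1$. The right tool there is Theorem~\ref{lem:free} alone. Its hypotheses are immediate: $A$ and $B$ inherit the unique root property from $G$, and since $(A*B)^{\mathrm{ab}}=A^{\mathrm{ab}}\oplus B^{\mathrm{ab}}$ surjects onto $\Z^2$, at least one factor surjects onto $\Z$ and hence has quotients of every prime order.

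Second, the ``compatibility'' obstacle you anticipate does not exist, and the malnormality / one-endedness route you sketch is beside the point. In the HNN case Theorem~\ref{thm:HNN} needs only the unique root property---no map to $\Z$ is required at all. In the amalgam case $G=A*_C B$ with $C\cong\Z$, verifying hypothesis~(2) of Theorem~\ref{thm:freeprod} is a one-line abelianization argument, not a general-position argument: the image of $C$ in $\Z^2$ is cyclic, so $G/C^G$ still surjects onto $\Z$; since $G/C^G\cong(A/C^A)*(B/C^B)$, one of $A/C^A$, $B/C^B$ surjects onto $\Z$, and we name that factor $A$. Hypothesis~(3) is automatic because an Abelian subgroup of a torsion-free hyperbolic group is infinite cyclic. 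No Rips machine, Stallings, or one-ended structure is needed anywhere.
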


We then consider some possible relaxations of the hypotheses (in
particular (2) in Theorem~\ref{thm:freeprod}), and show that in each
case there are $G$ with finitely generated commensurator:
\begin{mainthm}\label{cor:Z}
  (1) There exist word-hyperbolic groups with finitely generated
  commensurator.

  (2) There exists a finitely generated group which has the unique
  root property (so in particular is torsion-free), can be mapped onto
  $\Z$, and whose commensurator is finitely generated.
\end{mainthm}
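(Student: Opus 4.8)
Both parts will follow from the geometric rigidity already recalled in the introduction, applied to suitable non-arithmetic lattices. For part (1), take $G$ to be a non-arithmetic cocompact lattice in a rank-one simple Lie group other than $\SL(2,\mathbb R)$ --- concretely, a non-arithmetic cocompact lattice in $\operatorname{Isom}(\mathbb H^n)$ with $n\geq 3$, which exists by the construction of Gromov and Piatetski-Shapiro. Acting geometrically on $\mathbb H^n$, the group $G$ is word-hyperbolic. By Mostow--Prasad strong rigidity (available since the ambient group is not $\SL(2,\mathbb R)$), $\Comm(G)$ is isomorphic to the relative commensurator $\Comm_{\operatorname{Isom}(\mathbb H^n)}(G)$; and by Margulis' arithmeticity criterion, non-arithmeticity of $G$ forces this relative commensurator to be discrete (it is dense exactly in the arithmetic case). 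Containing $G$, it is then itself a cocompact lattice, hence finitely presented, so $\Comm(G)$ is finitely generated.

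For part (2) we run the same argument on a lattice that in addition surjects onto $\Z$, so it must have positive first Betti number and thus cannot be an arbitrary lattice as above. The plan is to take $G=\pi_1(M_\phi)$ where $M_\phi$ is the mapping torus of a pseudo-Anosov homeomorphism $\phi$ of a closed orientable surface $S$ of genus $\geq 2$, chosen so that $M_\phi$ is non-arithmetic (and, if one also wants $b_1(G)=1$, so that $\phi_*-\operatorname{id}$ is nonsingular on $H_1(S)$). Such an $M_\phi$ is a closed hyperbolic $3$-manifold by Thurston's hyperbolization theorem for fibered manifolds, so $G$ is word-hyperbolic; the bundle projection $M_\phi\to S^1$ gives a surjection $G\twoheadrightarrow\Z$; and exactly as in part (1), using strong rigidity for lattices in $\operatorname{PSL}(2,\mathbb C)\neq\SL(2,\mathbb R)$ and Margulis' theorem, $\Comm(G)\cong\Comm_{\operatorname{PSL}(2,\mathbb C)}(G)$ is a cocompact lattice, hence finitely generated. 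Finally $G$, being torsion-free word-hyperbolic, has the unique root property: the centralizer of an element of infinite order in a word-hyperbolic group is virtually cyclic, hence (being torsion-free) infinite cyclic, so a relation $g^n=h^n\neq 1$ places $g$ and $h$ in a common infinite cyclic subgroup $C(h^n)=\langle z\rangle$, whence $z^{na}=z^{nb}$ for $g=z^a$, $h=z^b$, forcing $g=h$; and $g^n=h^n=1$ forces $g=h=1$.

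The one genuinely non-formal ingredient is the existence of the required non-arithmetic examples: non-arithmetic cocompact lattices in $\operatorname{Isom}(\mathbb H^n)$, $n\geq 3$, and --- for part (2) --- a pseudo-Anosov homeomorphism of a closed surface whose mapping torus is non-arithmetic. I expect this to be the main point to secure: for part (2) I would exhibit an explicit $\phi$ and rule out arithmeticity by computing its invariant trace field, or else quote the existence of such manifolds from the literature. Everything else is a formal consequence of the introduction's identification of the abstract commensurator with the relative one, together with Margulis' theorem. Note that the examples must be chosen outside the classes covered by Theorems~\ref{lem:free}, \ref{thm:HNN} and~\ref{thm:freeprod}: since $\Comm$ is an invariant of the commensurability class, Corollary~\ref{cor:surf} shows that no group virtually free or virtually a surface group can have finitely generated commensurator, which is why these phenomena only appear for genuinely higher-dimensional hyperbolic groups.
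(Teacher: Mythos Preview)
Your proposal is correct, and for part~(1) it is essentially the paper's own argument: a non-arithmetic lattice in a rank-one Lie group other than $\SL(2,\mathbb R)$, together with Mostow rigidity and Margulis' criterion. The paper just picks a different source of examples (a $3$-orbifold group obtained by Dehn filling a knot in $S^2\times S^1$, following Lackenby), and explicitly notes that its example has torsion; your Gromov--Piatetski-Shapiro lattices can be taken torsion-free, which is a mild bonus.

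For part~(2) your route is genuinely different from the paper's. You stay inside the same rigidity framework and take $G=\pi_1(M_\phi)$ for a non-arithmetic closed fibered hyperbolic $3$-manifold: this is torsion-free word-hyperbolic, surjects onto $\Z$ via the fibration, and has finitely generated commensurator for exactly the reason you give. (Such manifolds certainly exist and were known well before this paper; your acknowledgement that securing an explicit example is the only non-formal step is accurate.) The paper instead builds, by hand, an amalgam $G=G_1\ast_{\langle u\rangle}G_2$ with $G_i=H_i\times\Z$ and $H_i$ an Obraztsov group (a $2$-generated simple complete torsion-free group whose proper subgroups are all cyclic), and then proves directly that every finite-index subgroup has a prescribed form and that $\Comm(G)=\Aut(G)$ is finitely generated. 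What each approach buys: yours is far shorter, and even produces a \emph{word-hyperbolic} witness for part~(2), so a single example covers both parts. The paper's construction, on the other hand, is not hyperbolic (it visibly contains $\Z^2$), but it is an amalgamated product over an infinite cyclic subgroup with the unique root property; this is precisely the setting of Theorem~\ref{thm:freeprod}, so it shows that hypothesis~(2) there (that $A/C^A$ surject onto $\Z$) cannot simply be dropped. Your fibered $3$-manifold group does not split over $\Z$ at all, so it proves Theorem~\ref{cor:Z}(2) as stated but does not speak to the sharpness of Theorem~\ref{thm:freeprod}.
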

It is a fundamental open question as whether all word-hyperbolic
groups are residually finite; this is actually equivalent to asking
whether all word-hyperbolic groups have arbitrarily large
quotients~\cite[Theorem~2]{Olsh}. Of course, a group with no
non-trivial finite quotient has identical automorphism group and
abstract commensurator.

We start, in the next section, by a sufficient condition to ensure
that an abstract commensurator cannot be finitely generated.

%%%%%%%%%%%%%%%%%%%%%%%%%%%%%%%%%%%%%%%%%%%%%%%%%%%%%%%%%%%%%%%%
\section{Infinitely generated abstract commensurators}
Two groups $G,H$ are \emph {abstractly commensurable} if there exist
finite index subgroups $G_1\leqslant G$ and $H_1\leqslant H$, such
that $G_1\cong H_1$. The following useful lemma is well-known; for
completeness we give its proof.

\begin{lem}\label{lem:comm}
  If $G$ and $H$ are abstractly commensurable groups, then
  $\Comm(G)\cong \Comm(H)$.
\end{lem}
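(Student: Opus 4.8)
The plan is to work directly from the definition of $\Comm$ via germs of isomorphisms between finite-index subgroups. Fix finite-index subgroups $G_1\leqslant G$ and $H_1\leqslant H$ together with an isomorphism $f\colon G_1\to H_1$. The key observation is that $\Comm(G)$ only ``sees'' $G$ through its finite-index subgroups, so it should be insensitive to passing from $G$ to $G_1$; that is, the first step is to prove the auxiliary claim that the natural map $\Comm(G_1)\to\Comm(G)$, sending the germ of an isomorphism $\varphi\colon A\to B$ between finite-index subgroups of $G_1$ to the germ of the same $\varphi$ viewed as an isomorphism between finite-index subgroups of $G$, is a well-defined group isomorphism. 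Well-definedness on germs is immediate since a common finite-index subgroup in $G_1$ on which two representatives agree is still finite-index in $G$; it is a homomorphism because the product operation in $\Omega(\cdot)$ restricts isomorphisms to preimages/images of intersections, and intersections of finite-index subgroups of $G_1$ are the same whether computed in $G_1$ or in $G$.

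Next I would check that this map $\Comm(G_1)\to\Comm(G)$ is injective and surjective. For surjectivity, given any germ in $\Comm(G)$ represented by $\psi\colon U\to V$ with $U,V\leqslant G$ of finite index, restrict $\psi$ to $U\cap G_1\cap\psi^{-1}(G_1)$; this is a finite-index subgroup of $G_1$, its image under $\psi$ lies in $G_1$ and is finite-index there, so the restricted map is a finite-index isomorphism inside $G_1$ whose germ maps to the original germ. For injectivity, if $\varphi\colon A\to B$ with $A,B\leqslant G_1$ becomes the identity germ in $\Comm(G)$, then $\varphi$ agrees with $\id$ on some finite-index subgroup of $G$; intersecting that subgroup with $A$ gives a finite-index subgroup of $G_1$ on which $\varphi=\id$, so $\varphi$ is already the identity germ in $\Comm(G_1)$. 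This establishes $\Comm(G_1)\cong\Comm(G)$, and symmetrically $\Comm(H_1)\cong\Comm(H)$.

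Finally, I would use the isomorphism $f\colon G_1\to H_1$ to transport germs: conjugation by $f$ sends an isomorphism $\varphi\colon A\to B$ between finite-index subgroups of $G_1$ to $f\varphi f^{-1}\colon f(A)\to f(B)$ between finite-index subgroups of $H_1$, and since $f$ is an honest isomorphism this respects finite index, germ equivalence, and the product operation, hence induces $\Comm(G_1)\cong\Comm(H_1)$. Composing the three isomorphisms gives $\Comm(G)\cong\Comm(G_1)\cong\Comm(H_1)\cong\Comm(H)$.

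I do not anticipate a serious obstacle here — the lemma is essentially bookkeeping — but the one point requiring genuine care is verifying that the natural map $\Comm(G_1)\to\Comm(G)$ is a \emph{homomorphism}, i.e.\ that forming the product of two germs commutes with the inclusion of finite-index subgroups; this hinges on the fact that for $H_1'\cap H_2$ with $H_1',H_2$ finite-index in $G_1$, the domain $\alpha^{-1}(H_1'\cap H_2)$ of the product $\alpha\beta$ is the same subgroup whether one regards everything as living in $G_1$ or in $G$, which is clear but should be stated.
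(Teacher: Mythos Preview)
Your proposal is correct and follows essentially the same route as the paper: the paper first reduces (``without loss of generality'') to the case where $H$ is itself a finite-index subgroup of $G$, and then constructs the canonical map $\Psi:\Comm(H)\to\Comm(G)$ together with an explicit inverse $\Phi$ given by the same restriction you describe in your surjectivity step. Your three-step chain $\Comm(G)\cong\Comm(G_1)\cong\Comm(H_1)\cong\Comm(H)$ just makes the WLOG reduction explicit, and your injectivity/surjectivity checks amount to verifying $\Psi\circ\Phi=\id$ and $\Phi\circ\Psi=\id$, which the paper leaves to the reader.
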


\begin{proof}
  Without loss of generality we can assume that $H$ is a subgroup of
  finite index in $G$. The embedding of $H$ in $G$ induces a canonical
  map $\Psi: \Comm(H)\rightarrow \Comm(G)$.  Now we define a map
  $\Phi: \Comm(G)\rightarrow \Comm(H)$ by the rule: for
  $\alpha:G_1\rightarrow G_2$ from $\Comm(G)$ we set
  $\Phi(\alpha)=\alpha\restrict{H_1}:H_1\rightarrow H_2$, where
  $H_1=\alpha^{-1}(G_2\cap H)\cap H$ and $H_2=\alpha(G_1\cap H)\cap
  H$. Clearly $\Phi(\alpha)$ belongs to $\Comm(H)$.  We leave it to
  the reader to check that $\Psi$ and $\Phi$ are homomorphisms, and
  that both compositions $\Psi\circ \Phi$ and $\Phi\circ \Psi$ are the
  identity.
\end{proof}

A group $G$ has the \emph{unique root property} if for any $x,y\in G$
and any positive integer $n$, the equality $x^n=y^n$ implies
$x=y$. Groups with the unique root property are torsion-free. It is
well known that, in torsion-free word-hyperbolic groups, nontrivial
elements have cyclic centralizers~\cite[pages 462--463]{BH}; so they
have the unique root property, by the following standard
\begin{lem}\label{lem:urp}
  Let $G$ be a torsion-free group with cyclic centralizers of
  nontrivial elements. Then $G$ has the unique root property.
\end{lem}
\begin{proof}
  Let $x,y$ be nontrivial elements of $G$. If $x^n=y^n$, then
  $Z(x^n)\geqslant\langle x,y\rangle$. But $Z(x^n)=\langle z\rangle$
  for some $z$, so there are $p,q\in\Z$ with $x=z^p$ and $y=z^q$. Then
  $x^n=y^n$ gives $z^{pn}=z^{qn}$, so $p=q$ and $x=y$.
\end{proof}

The usefulness of the unique root property can be seen immediately in
the following two lemmas.

\begin{lem}\label{lem:aut}
  Let $G$ be a group with the unique root property. Then $\Aut(G)$
  naturally embeds in $\Comm(G)$.
\end{lem}
\begin{proof}
  There is a natural homomorphism $\Aut(G)\rightarrow
  \Comm(G)$. Suppose that some $\alpha\in \Aut(G)$ lies in its
  kernel. Then $\alpha\restrict H=\id$ for some subgroup $H$ of finite index
  in $G$. If $m$ is this index, then $g^{m!}\in H$ for every $g\in
  G$. Then $\alpha(g^{m!})=g^{m!}$. Extracting roots, we get
  $\alpha(g)=g$, that is $\alpha=\id$.
\end{proof}

\begin{lem}\label{lem:restrict}
  Let $G$ be a group with the unique root property. Let $\varphi_1:
  H_1\to H_1'$ and $\varphi_2: H_2\to H_2'$ be two isomorphisms
  between subgroups of finite index in $G$.  Suppose that
  $[\varphi_1]=[\varphi_2]$ in $\Comm(G)$. Then $\varphi_1\restrict{H_1\cap
    H_2}=\varphi_2\restrict{H_1\cap H_2}$.
\end{lem}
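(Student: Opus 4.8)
The plan is to unwind the definition of the equivalence relation $\sim$ and then run the root-extraction trick already used in the proof of Lemma~\ref{lem:aut}, but relative to $H_1\cap H_2$ rather than to $G$. Since $[\varphi_1]=[\varphi_2]$ in $\Comm(G)$, by definition there is a subgroup $H$ of finite index in $G$ on which both $\varphi_1$ and $\varphi_2$ are defined and satisfy $\varphi_1\restrict H=\varphi_2\restrict H$; in particular $H$ is contained in both $H_1$ and $H_2$, hence in $K:=H_1\cap H_2$. (If one prefers, one may replace $H$ by its normal core in $G$, which is still of finite index in $G$ and on which the two maps still agree.)

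Next I would set $m=\index{K}{H}$ and observe that for every $g\in K$ one has $g^{m!}\in H$: this follows from the permutation action of $K$ on the coset set $K/H$, under which the image of $g$ has order dividing $m!$, so $g^{m!}$ fixes the trivial coset. Consequently $\varphi_1(g^{m!})=\varphi_2(g^{m!})$, and since $\varphi_1,\varphi_2$ are homomorphisms this reads $\varphi_1(g)^{m!}=\varphi_2(g)^{m!}$ in $G$. The unique root property of $G$ then forces $\varphi_1(g)=\varphi_2(g)$. As $g$ ranges over all of $K=H_1\cap H_2$, this is exactly the assertion $\varphi_1\restrict{H_1\cap H_2}=\varphi_2\restrict{H_1\cap H_2}$.

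I do not expect a genuine obstacle here: the only non-formal ingredient is the elementary fact that a finite-index subgroup $H\leqslant K$ contains $g^{m!}$ for every $g\in K$ (equivalently, the normal-core reduction), combined with the unique root property, which is available by hypothesis. The argument is in effect a relativized copy of the kernel computation carried out in Lemma~\ref{lem:aut}.
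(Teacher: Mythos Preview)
Your proof is correct and follows essentially the same route as the paper's: take the finite-index subgroup $H\leqslant H_1\cap H_2$ on which $\varphi_1$ and $\varphi_2$ agree, set $m=\index{(H_1\cap H_2)}{H}$, use $g^{m!}\in H$ to get $\varphi_1(g)^{m!}=\varphi_2(g)^{m!}$, and extract roots. The only difference is cosmetic---you spell out the permutation-action justification for $g^{m!}\in H$ and mention the normal-core alternative, whereas the paper simply asserts it.
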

\begin{proof}
  The equality $[\varphi_1]=[\varphi_2]$ means that there exists a
  subgroup $H$ of finite index in $G$ such that both $\varphi_1$ and
  $\varphi_2$ are defined on $H$ and
  $\varphi_1\restrict H=\varphi_2\restrict H$. Clearly $H\leqslant H_1\cap
  H_2$. Denote $m=\index{(H_1\cap H_2)}H$.  Let $h$ be an arbitrary element
  of $H_1\cap H_2$. Then $h^{m!}\in H$ and so
  $\varphi_1(h^{m!})=\varphi_2(h^{m!})$. Since $G$ is a group with the
  unique root property, we get $\varphi_1(h)=\varphi_2(h)$.
\end{proof}

Let us call the \emph{subindex} of a finite-index subgroup $H\leqslant
G$ the minimal $n$, denoted $\subindex GH$, such that there exists a
sequence of subgroups $H=G_0\leqslant G_1\leqslant\cdots\leqslant
G_k=G$ with $\index{G_i}{G_{i-1}}\le n$ for all
$i\in\{1,\dots,k\}$. Observe that given $F\leqslant H\leqslant G$, we
have $\subindex GF\le \max \{\subindex GH, \subindex HF\}$.

\begin{lem}\label{lem:subindex}
  Let $G$ be a group and let $\alpha_i:H_i\to H_i'$, for $i=1,\dots,r$
  be isomorphisms between subgroups of finite index of $G$. Assume
  that $\subindex G{H_i}\le n$ and $\subindex G{H'_i}\le n$ for all
  $i$. Then any finite product of $[\alpha_i]$'s can be realized by an
  isomorphism $\beta: H\to H'$, where $H,H'$ are subgroups of finite
  index and subindex at most $n$.
\end{lem}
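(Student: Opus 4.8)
The plan is to induct on the number of factors in the product, maintaining a subindex bound on both the domain and range of the isomorphism realizing a partial product. The base case $r=1$ (a single $[\alpha_i]$) is trivial: the given $\alpha_i:H_i\to H_i'$ already has $\subindex{G}{H_i}\le n$ and $\subindex{G}{H_i'}\le n$. For the inductive step, suppose a product $[\alpha_{j_1}]\cdots[\alpha_{j_k}]$ is realized by some $\beta:H\to H'$ with $\subindex{G}{H}\le n$ and $\subindex{G}{H'}\le n$, and we wish to multiply by $[\alpha_{j_{k+1}}]$ (on the right; the left case is symmetric, or handled by replacing $\alpha$ with $\alpha^{-1}$). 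Write $\alpha=\alpha_{j_{k+1}}:H_*\to H_*'$ with $\subindex{G}{H_*}\le n$ and $\subindex{G}{H_*'}\le n$. The composite $\beta\alpha$ is, by the definition of product in $\Omega(G)$, the isomorphism from $\beta^{-1}(H'\cap H_*)$ to $\alpha(H'\cap H_*)$.

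The key observation is that subindex behaves well under the two operations we perform — intersection and taking images under an isomorphism of known subindex — and this is where the main work lies. First, for intersection: if $A,B\leqslant G$ have $\subindex{G}{A}\le n$ and $\subindex{G}{B}\le n$, I claim $\subindex{G}{A\cap B}\le n$. Indeed, choose a chain $A=G_0\leqslant\cdots\leqslant G_k=G$ with each index $\le n$; then $A\cap B=(A\cap B)\leqslant(G_1\cap B)\leqslant\cdots\leqslant(G_k\cap B)=B$, and since $\index{G_i\cap B}{G_{i-1}\cap B}\le\index{G_i}{G_{i-1}}\le n$, we get $\subindex{B}{A\cap B}\le n$; combined with $\subindex{G}{B}\le n$ and the transitivity inequality $\subindex{G}{A\cap B}\le\max\{\subindex{G}{B},\subindex{B}{A\cap B}\}$ recorded just before the lemma, this gives the claim. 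Applying this to $A=H'$ and $B=H_*$, the subgroup $H'\cap H_*$ has subindex at most $n$ in $G$.

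Second, for images: if $\beta:H\to H'$ is an isomorphism and $K\leqslant H$ satisfies $\subindex{H}{K}\le n$, then $\subindex{H'}{\beta(K)}\le n$, since $\beta$ carries a witnessing chain for $K$ in $H$ to a witnessing chain for $\beta(K)$ in $H'$ with the same indices. So from $\subindex{G}{H'\cap H_*}\le n$ we first need $\subindex{G}{H'\cap H_*}\le n$ to upgrade to $\subindex{H'}{H'\cap H_*}\le n$ — which follows because $\subindex{H'}{H'\cap H_*}\le\subindex{G}{H'\cap H_*}\le n$ (any chain from $H'\cap H_*$ up to $G$ can be intersected with $H'$, only decreasing indices, to give a chain inside $H'$) — hence $\subindex{H}{\beta^{-1}(H'\cap H_*)}\le n$ and therefore $\subindex{G}{\beta^{-1}(H'\cap H_*)}\le\max\{\subindex{G}{H},\subindex{H}{\beta^{-1}(H'\cap H_*)}\}\le n$. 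The same reasoning applied to $\alpha$ gives $\subindex{G}{\alpha(H'\cap H_*)}\le n$. Thus $\beta\alpha$ is realized by an isomorphism whose domain and range have subindex at most $n$, completing the induction. The main obstacle, such as it is, is purely bookkeeping: one must be careful that "subindex $\le n$ in $G$" and "subindex $\le n$ in an intermediate subgroup" are interchanged only in the legitimate direction, and that the product in $\Omega(G)$ is unwound correctly so that the relevant subgroups really are intersections and images of the pieces already under control.
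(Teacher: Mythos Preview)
Your proof is correct and follows essentially the same approach as the paper: induction on the number of factors, reducing to a single product, then controlling the subindex of the intersection $H'\cap H_*$ via the chain-intersection trick and transporting through the isomorphisms. The only difference is organizational --- you isolate general ``intersection'' and ``image'' sub-claims where the paper does the specific computation in one line --- but the underlying argument is identical.
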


\begin{proof}
  By induction, it suffices to consider $\alpha_1:H_1\to H'_1$ and
  $\alpha_2:H_2\to H'_2$, and their product
  $\beta=\alpha_1\alpha_2$. Set $K=H'_1\cap H_2$,  $H=\alpha_1^{-1}(K)$ and $H'=\alpha_2(K)$, so that
  $\beta:H\to H'$. Let
  $H_2=G_0\leqslant G_1\leqslant \cdots\leqslant G_k=G$ be a sequence of subgroups with
  $\subindex{G_i}{G_{i-1}}\le n$. The sequence $K=G_0\cap H_1'\leqslant G_1\cap
  H_1'\leqslant\cdots\leqslant G_k\cap H_1'=H_1'$ shows that $\subindex{H'_1}K\le n$.
  Then
  $$\subindex GH\le\max\{\subindex
  G{H_1},\subindex{H_1}H\}=\max\{\subindex
  G{H_1},\subindex{H'_1}K\}\le n;$$
  and similarly $\subindex G{H'}\le n$.
\end{proof}

\begin{lem}\label{lem:extend}
  Let $G$ be a group with the unique root property. Let $\varphi_1:
  H_1\to H_1'$ and $\varphi_2: H_2\to H_2'$ be two isomorphisms
  between subgroups of finite index in $G$.  Suppose that
  \begin{enumerate}
  \item $H_2$ is a normal subgroup of $G$;
  \item $\varphi_1\restrict{H_1\cap H_2}=\varphi_2\restrict{H_1\cap
      H_2}$.
  \end{enumerate}
  
  Then $\varphi_1,\varphi_2$ have a common extension, that is there
  exists an isomorphism $\varphi:H_1H_2\to H_1'H_2'$, such that
  $\varphi\restrict{H_i}=\varphi_i$ for $i=1,2$.
\end{lem}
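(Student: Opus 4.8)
The plan is to define $\varphi$ by the only formula available. Since $H_2\triangleleft G$, the product $H_1H_2$ is a finite-index subgroup of $G$, every element of which factors as $h_1h_2$ with $h_i\in H_i$; so I would set $\varphi(h_1h_2):=\varphi_1(h_1)\varphi_2(h_2)$. It then suffices to check that $\varphi$ is well defined, that it is a homomorphism, and that it is injective: once $\varphi$ is a homomorphism its image is the set $\{\varphi_1(h_1)\varphi_2(h_2):h_i\in H_i\}=H_1'H_2'$ (which is thus a finite-index subgroup of $G$), and $\varphi\restrict{H_i}=\varphi_i$ for $i=1,2$ since one may take the complementary factor trivial.

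Well-definedness is routine: if $h_1h_2=k_1k_2$, then $t:=k_1^{-1}h_1=k_2h_2^{-1}$ lies in $H_1\cap H_2$, so $\varphi_1(t)=\varphi_2(t)$ by hypothesis (2), and substituting $h_1=k_1t$, $h_2=t^{-1}k_2$ collapses $\varphi_1(h_1)\varphi_2(h_2)$ to $\varphi_1(k_1)\varphi_2(k_2)$. For the homomorphism property, one uses normality of $H_2$ to rewrite a product as $gg'=(h_1k_1)\bigl((k_1^{-1}h_2k_1)k_2\bigr)$, and everything then reduces to the single identity
$$\varphi_2(h_1^{-1}h_2h_1)=\varphi_1(h_1)^{-1}\varphi_2(h_2)\varphi_1(h_1)\qquad(h_1\in H_1,\ h_2\in H_2),$$
i.e.\ to the claim that $\varphi_1$ intertwines the conjugation action of $h_1$ on $H_2$ with that of $\varphi_1(h_1)$ on $H_2'$. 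This is the heart of the argument, and the step I expect to be the main obstacle, because $\varphi_1$ and $\varphi_2$ are only assumed to agree on $H_1\cap H_2$, not on all of $H_2$. The device is to pass to a high power: setting $m=\index{H_2}{H_1\cap H_2}$, the element $h_2$ permutes the $m$ cosets of $H_1\cap H_2$ in $H_2$, so $h_2^{m!}\in H_1\cap H_2$, and hence also $h_1^{-1}h_2^{m!}h_1\in H_1\cap H_2$ (it lies in $H_1$, and in $H_2$ by normality). Applying (2) to these two elements and expanding with the homomorphism property of $\varphi_1$ turns $\varphi_2(h_1^{-1}h_2h_1)^{m!}$ into $\bigl(\varphi_1(h_1)^{-1}\varphi_2(h_2)\varphi_1(h_1)\bigr)^{m!}$, whereupon the unique root property of $G$ strips off the exponent $m!$ and yields the identity. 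A byproduct is that each $\varphi_1(h_1)$ normalizes $H_2'$ (apply the identity for $h_1$ and for $h_1^{-1}$), which is what makes $H_1'H_2'$ a subgroup in the first place.

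Finally, for injectivity I would argue without computation: $\ker\varphi\cap H_2=\ker(\varphi\restrict{H_2})=\ker\varphi_2=1$, and since $H_2$ is normal of finite index in $H_1H_2$, the subgroup $\ker\varphi$ embeds into the finite group $H_1H_2/H_2$; but $G$ is torsion-free (it has the unique root property), so $\ker\varphi=1$. Combined with the observations of the first paragraph, this exhibits $\varphi\colon H_1H_2\to H_1'H_2'$ as the desired common extension of $\varphi_1$ and $\varphi_2$.
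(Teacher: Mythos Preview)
Your proof is correct and follows essentially the same approach as the paper's: define $\varphi$ by the obvious formula, reduce the homomorphism check to the identity $\varphi_2(h_1^{-1}h_2h_1)=\varphi_1(h_1)^{-1}\varphi_2(h_2)\varphi_1(h_1)$, and prove that identity by passing to a power lying in $H_1\cap H_2$ and invoking unique roots. The only cosmetic difference is in the injectivity step: the paper intersects $\ker\varphi$ with $H_1$ (if $\ker\varphi$ were nontrivial it would be infinite, hence meet the finite-index subgroup $H_1$ nontrivially, contradicting injectivity of $\varphi_1$), whereas you intersect with $H_2$ and use normality to embed $\ker\varphi$ into the finite quotient $H_1H_2/H_2$; both arguments are equally short.
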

\begin{proof}
  We define $\varphi:H_1H_2\to H_1'H_2'$ by
  $\varphi(h_1h_2)=\varphi_1(h_1)\varphi_2(h_2)$ for any $h_1\in H_1$
  and $h_2\in H_2$. This definition is unambiguous because of Property
  (2). We prove first that $\varphi$ is a homomorphism.

  Take $x\in H_1H_2$ and $y\in H_1H_2$. Then $x=g_1g_2$ and $y=h_1h_2$ for
  some $g_1,h_1\in H_1$ and $g_2,h_2\in H_2$. Since $xy=g_1h_1\cdot
  h_1^{-1}g_2h_1h_2$, where $h_1^{-1}g_2h_1\in H_2$ by Property (1),
  we have
  $$\varphi(xy)=\varphi_1(g_1)\varphi_1(h_1)\cdot \varphi_2(h_1^{-1}g_2h_1)\varphi_2(h_2).$$
  On the other hand we have
  $$\varphi(x)\varphi(y)=\varphi_1(g_1)\varphi_2(g_2)\varphi_1(h_1)\varphi_2(h_2).$$
  Thus it is enough to verify
  that
  \begin{equation}\label{eq:*}
    \varphi_2(h_1^{-1}g_2h_1)=\varphi_1(h_1)^{-1}\varphi_2(g_2)\varphi_1(h_1).\tag{*}
  \end{equation}
  Since $H_1\cap H_2$ has finite index in $H_2$, we have $g_2^m\in
  H_1\cap H_2$ for some positive integer $m$. Then
  $h_1^{-1}g_2^mh_1\in H_1\cap H_2$ and so
  $$\varphi_2(h_1^{-1}g_2^mh_1)=\varphi_1(h_1^{-1}g_2^mh_1)=\varphi_1(h_1^{-1})\varphi_1(g_2^m)\varphi_1(h_1)=
  \varphi_1(h_1)^{-1}\varphi_2(g_2)^m\varphi_1(h_1).$$ Since $G$ is a
  group with the unique root property, we can extract $m$-th roots
  from both sides of the last equation and get~\eqref{eq:*}.

  Clearly $\varphi$ maps onto $H'_1H'_2$. Assume for contradiction that
  $\varphi$ is not injective; then, since $G$ is torsion-free,
  $\ker\varphi$ is infinite. Since $H_1$ has finite index,
  $\ker\varphi\cap H_1$ is non-trivial, so $\varphi_1$ is not
  injective, a contradiction.
\end{proof}

\begin{thm}\label{thm:infgen}
  Let $G$ be a group with the unique root property. Suppose that, for
  infinitely many primes $p$, there exists a normal subgroup $H$ of
  index $p$ in $G$ and an automorphism of $H$ that cannot be extended
  to an automorphism of $G$.

  Then the commensurator of $G$ is not finitely generated.
\end{thm}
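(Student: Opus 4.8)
The plan is to argue by contradiction. Assume $\Comm(G)$ is generated by finitely many classes $[\beta_1],\dots,[\beta_r]$, with $\beta_i\colon H_i\to H_i'$, and let $n$ be the maximum of all the subindices $\subindex G{H_i}$ and $\subindex G{H_i'}$. Since $[\beta_i]^{-1}=[\beta_i^{-1}]$ is represented by $\beta_i^{-1}\colon H_i'\to H_i$, every element of $\Comm(G)$ is a finite product of classes of isomorphisms drawn from the family $\{\beta_1^{\pm1},\dots,\beta_r^{\pm1}\}$, all of whose domains and codomains have subindex at most $n$. Hence, by Lemma~\ref{lem:subindex}, \emph{every} element of $\Comm(G)$ can be realized by an isomorphism $\varphi\colon K\to K'$ between finite-index subgroups with $\subindex GK\le n$ and $\subindex G{K'}\le n$.

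Now use the hypothesis to pick a prime $p>n$ — there are infinitely many primes for which the stated configuration exists, so at least one exceeds $n$ — together with a normal subgroup $H$ of index $p$ in $G$ and an automorphism $\alpha\colon H\to H$ that does not extend to an automorphism of $G$. By the previous paragraph, $[\alpha]\in\Comm(G)$ is represented by some $\varphi\colon K\to K'$ with $\subindex GK\le n<p$ and $\subindex G{K'}\le n<p$. The key elementary observation is that a subindex strictly below $p$ forces the ordinary index to be coprime to $p$: if $K=G_0\leqslant G_1\leqslant\cdots\leqslant G_k=G$ witnesses $\subindex GK\le n$, then each $\index{G_i}{G_{i-1}}<p$, hence is prime to $p$ (as $p$ is prime), so $p\nmid\index GK$; likewise $p\nmid\index G{K'}$.

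It remains to glue $\varphi$ and $\alpha$ into an automorphism of $G$. Since $G$ has the unique root property and $[\varphi]=[\alpha]$, Lemma~\ref{lem:restrict} yields $\varphi\restrict{K\cap H}=\alpha\restrict{K\cap H}$; and $H$ is normal in $G$, so Lemma~\ref{lem:extend} furnishes a common extension $\psi\colon KH\to K'H$ with $\psi\restrict H=\alpha$. But $KH$ is a subgroup of $G$ containing $H$, so $\index G{KH}$ divides $\index GH=p$; since it also divides $\index GK$, which is prime to $p$, it equals $1$, i.e.\ $KH=G$, and similarly $K'H=G$. Therefore $\psi$ is an automorphism of $G$ whose restriction to $H$ is $\alpha$, contradicting the choice of $\alpha$. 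We conclude that $\Comm(G)$ is not finitely generated.

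The only real content is the middle step: finite generation of $\Comm(G)$ imposes a uniform bound $n$ on the subindex of representing isomorphisms, and choosing a prime $p>n$ from the hypothesis makes the relevant representing subgroups have index coprime to $p$ — precisely the condition needed to promote the ``local'' common extension of Lemma~\ref{lem:extend} to a genuine element of $\Aut(G)$. The rest (that $KH$ is a subgroup, the divisibility of indices, the comparison of subindex with index) is routine, so I expect no serious obstacle once Lemmas~\ref{lem:subindex}, \ref{lem:restrict} and \ref{lem:extend} are lined up in this order.
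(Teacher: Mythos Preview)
Your proof is correct and follows essentially the same path as the paper's: assume finite generation, bound the subindices of representatives by some $n$ via Lemma~\ref{lem:subindex}, choose a prime $p>n$ from the hypothesis, and then combine Lemmas~\ref{lem:restrict} and~\ref{lem:extend} to extend the non-extendable automorphism to all of $G$, yielding a contradiction. Your write-up is slightly more explicit (you spell out why $\subindex GK<p$ forces $p\nmid\index GK$, and you handle inverses of generators explicitly), but the argument is the same.
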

\begin{proof}
  Suppose that $\Comm(G)$ is generated by a finite number of classes
  of isomorphisms $\alpha_i: H_i\to H_i'$, for $i=1,\dots,k$, where
  $H_i,H_i'$ are subgroups of finite index in $G$.  Set
  $n=\max\{\subindex G{H_i},\subindex G{H'_i}:\,i=1,\dots,k\}$.

  Now take a prime number $p>n$. By assumption, there exists a normal
  subgroup $H$ of index $p$ in $G$ and an automorphism $\beta$ of $H$,
  which cannot be extended to an automorphism of $G$.

  Clearly $[\beta]\in \Comm(G)$. By Lemma~\ref{lem:subindex}, the class
  $[\beta]$ can be realized by an isomorphism $\alpha:A\to B$, where
  $A,B$ are subgroups of finite index in $G$ and subindex at most $n$.
  By Lemma~\ref{lem:restrict}, the automorphisms $\beta$ and $\alpha$
  coincide on the subgroup $H\cap A$.

  By Lemma~\ref{lem:extend}, the automorphism $\beta$ can be extended
  to an isomorphism $\varphi:AH\to BH$. Note that $AH=BH=G$ because
  the indices of $A$ and $H$ are coprime and the indices of $B$ and
  $H$ are coprime. We have reached a contradiction.
\end{proof}

We shall also need a variant of the previous result:
\begin{prop}\label{prop:infgen}
  Let $G$ be a group with the unique root property. Suppose that, for
  infinitely many primes $p$, there exists a subgroup $H$ of index $p$
  which is isomorphic to $G$.

  Then the commensurator of $G$ is not finitely generated.
\end{prop}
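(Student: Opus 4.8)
The plan is to run the argument of Theorem~\ref{thm:infgen} almost verbatim, replacing the ``non-extendability'' contradiction at the end by a divisibility obstruction on indices. Assume for contradiction that $\Comm(G)$ is generated by finitely many classes $[\alpha_i]$ of isomorphisms $\alpha_i:H_i\to H_i'$ between finite-index subgroups of $G$, and put $n=\max\{\subindex G{H_i},\subindex G{H_i'}:i=1,\dots,k\}$; we may harmlessly enlarge the generating set so that it is closed under inverses, which does not affect $n$ since $\subindex G{H_i'}\le n$ already.

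First I would use the hypothesis to pick a prime $p>n$ admitting a subgroup $H\leqslant G$ with $\index GH=p$ and $H\cong G$, together with an isomorphism $\psi:G\to H$. As $H$ has finite index in $G$, the map $\psi$ represents an element $[\psi]\in\Comm(G)$, which by our assumption is a product of the $[\alpha_i]^{\pm1}$. By Lemma~\ref{lem:subindex} this product is realized by some isomorphism $\alpha:A\to B$ with $\subindex GA\le n$ and $\subindex GB\le n$. Since $[\alpha]=[\psi]$ and $G$ has the unique root property, Lemma~\ref{lem:restrict} applied to $\alpha$ (with domain $A$) and $\psi$ (with domain $G$) yields $\alpha\restrict A=\psi\restrict A$, because $A\cap G=A$. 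Consequently $B=\alpha(A)=\psi(A)\leqslant\psi(G)=H$, and hence $p=\index GH$ divides $\index GB$.

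It remains to see that this contradicts $\subindex GB\le n<p$, and this is where I expect the only (very minor) subtlety to lie: one must observe that a subindex bound genuinely restricts the prime divisors of the true index. Indeed, if $B=G_0\leqslant G_1\leqslant\cdots\leqslant G_m=G$ is a chain with $\index{G_i}{G_{i-1}}\le n$ for all $i$, then $\index GB=\prod_{i=1}^m\index{G_i}{G_{i-1}}$ is a product of positive integers each at most $n$, so every prime divisor of $\index GB$ is at most $n$. This is incompatible with $p\mid\index GB$ and $p>n$, giving the desired contradiction. Note that, unlike in Theorem~\ref{thm:infgen}, normality of $H$ and Lemma~\ref{lem:extend} play no role here.
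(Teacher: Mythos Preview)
Your proof is correct and follows the paper's own argument essentially step for step: assume finite generation, bound the subindex by $n$, pick a prime $p>n$, realize $[\psi]$ by some $\alpha:A\to B$ of subindex at most $n$, and use Lemma~\ref{lem:restrict} to identify $\alpha$ with $\psi$ on $A$.

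The one genuine difference is at the very end. The paper invokes Lemma~\ref{lem:extend} with $\varphi_2=\beta:G\to H$ (so $H_2=G$, trivially normal) to produce an ``extension'' $\varphi:G\to BH$; since $\varphi$ must agree with $\beta$ on all of $G$, one gets $BH=H$, while coprimality of $\index GB$ and $p$ forces $BH=G$, a contradiction. You instead observe directly that $B=\alpha(A)=\psi(A)\leqslant\psi(G)=H$, whence $p\mid\index GB$, and this clashes with the fact that $\subindex GB\le n$ forces every prime divisor of $\index GB$ to be at most $n$. Your route is a clean shortcut: it makes explicit the divisibility obstruction that the paper's coprimality remark is implicitly using, and it shows that Lemma~\ref{lem:extend} is not actually needed here (the extension being vacuous when the domain of $\beta$ is already all of $G$).
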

\begin{proof}
  Suppose as above that $\Comm(G)$ is generated by a finite number of
  classes of isomorphisms $\alpha_i: H_i\to H_i'$, for $i=1,\dots,k$,
  where $H_i,H_i'$ are subgroups of finite index in $G$.  Set
  $n=\max\{\subindex G{H_i},\subindex G{H'_i}:\,i=1,\dots,k\}$.

  Now take a prime number $p>n$. By assumption, there exists a
  subgroup $H$ of index $p$ in $G$ and an isomorphism $\beta:G\to H$.

  Clearly $[\beta]\in \Comm(G)$. By Lemma~\ref{lem:subindex}, the
  class $[\beta]$ can be realized by an isomorphism $\alpha:A\to B$,
  where $A,B$ are subgroups of finite index in $G$ and subindex at
  most $n$.  By Lemma~\ref{lem:restrict}, the automorphisms $\beta$
  and $\alpha$ coincide on $A$.

  By Lemma~\ref{lem:extend}, the automorphism $\beta$ can be extended
  to an isomorphism $\varphi:G\to BH$. Note that $BH=G$ because the
  indices of $B$ and $H$ are coprime. We have reached a contradiction.
\end{proof}

\begin{proof}[Proof of Corollary~\ref{cor:surf}]
  It is well known that $G$ has the unique root property (e.g.\
  because $G$ is a torsion-free hyperbolic group, see
  Lemma~\ref{lem:urp}; or more directly because $G$ is a group of
  diagonalizable $2\times 2$ matrices).

  First consider the case in which $G$ is a free group with basis
  $X=\{x,y,\dots\}$.  Given an integer $p>1$, let $G\rightarrow
  \Z/p\Z$ be the homomorphism which sends $x$ to $1$ and all other
  elements of $X$ to $0$. The kernel $H$ of this homomorphism is free
  on $Y=\{x^p,y,x^{-1}yx,\dots,x^{1-p}yx^{p-1},\dots\}$. Clearly, the
  automorphism of $H$ which exchanges $y$ and $x^p$ and fixes all
  other elements of $Y$ cannot be extended to an automorphism of $G$,
  because $x^p$ is primitive in $H$ but not in $G$. By
  Theorem~\ref{thm:infgen}, $\Comm(G)$ is not finitely generated.

  It is convenient to translate this argument to topological
  language. The group $G$ is the fundamental group of a rose $R$, with petals
  indexed by the elements of $X$. Consider the regular degree-$p$ cover $\widetilde R$
  of $R$, in which a petal (say $x$) has been unfolded $p$ times to a
  ``gynoecium'' (central circle) $\tilde x$. Consider another petal
  $y$ of $R$, and its lift $\tilde y$. The graph $\widetilde R$ is
  homotopy equivalent to a rose, so admits a homotopy equivalence
  $\varphi$ that exchanges $\tilde x$ and $\tilde y$ while fixing (up
  to homotopy) the other petals. Then $\varphi$ cannot be induced by a
  homotopy equivalence of $R$, because it fixes (up to homotopy) some
  lift of $y$ while moves another.

  Consider now the case in which $G=\pi_1(S)$ where $S$ is a compact
  closed surface of negative Euler characteristic.  By Lemma
  \ref{lem:comm} we may assume that $S$ is orientable.  Given an
  integer $p>1$, let $\widetilde{S}\rightarrow S$ a regular degree-$p$
  cover of $S$. Clearly $\widetilde{S}$ is of strictly more negative
  Euler characteristic.

  Consider two handles $x,x'$ of $\widetilde{S}$ covering the same
  handle of $S$, and a handle $y$ that covers a different handle of
  $S$. Let $T$ be a neighbourhood of $x$, $y$ and a path connecting
  $x$ to $y$ that is homeomorphic to a punctured $2$-handlebody.  Let
  $\varphi$ be the homeomorphism of $\widetilde S$ that exchanges $x$
  and $y$ and is homotopic to the identity outside of $T$. Again,
  $\varphi$ is not induced by a homeomorphism of $S$, since it moves
  $x$ while it fixes its conjugate $x'$ . Therefore, the automorphism
  induced by $\varphi$ on $\pi_1(\widetilde{S})$ cannot be extended to
  an automorphism of $\pi_1(S)$. As above, Theorem~\ref{thm:infgen}
  completes the proof.
\end{proof}

\section{Free products of groups}
We prove in this section that many free products have infinitely
generated commensurator.

\begin{lem}\label{lem:extendAB}
  Let $H$ be a finite-index subgroup of $G$; assume $G$ is generated
  by the union of two subgroups $A,B$ and has the unique root property; let $\varphi:H\to H$
  be an automorphism. If $\varphi\neq id$, but $\varphi\restrict{H\cap
    A}=\id$,  $\varphi\restrict{H\cap B}=\id$, then $\varphi$ does not extend to
  an automorphism of~$G$.
\end{lem}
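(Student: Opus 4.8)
The plan is to argue by contradiction: suppose $\varphi$ extends to an automorphism $\psi$ of $G$. First I would observe that $\psi$ agrees with the identity on $H\cap A$ and on $H\cap B$, which are finite-index subgroups of $A$ and $B$ respectively. The aim is to deduce that $\psi$ fixes all of $A$ and all of $B$, hence fixes $G=\langle A,B\rangle$, contradicting $\varphi\neq\id$.

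The key step is a root-extraction argument local to $A$: given $a\in A$, since $H\cap A$ has finite index $m$ in $A$, we have $a^{m!}\in H\cap A$, so $\psi(a^{m!})=\varphi(a^{m!})=a^{m!}$, i.e.\ $\psi(a)^{m!}=a^{m!}$. Because $G$ has the unique root property, this forces $\psi(a)=a$. The same reasoning applied to $B$ gives $\psi\restrict B=\id$. Since $A\cup B$ generates $G$, we conclude $\psi=\id$, whence $\varphi=\psi\restrict H=\id$, the desired contradiction. Note this uses only that $H\cap A$ and $H\cap B$ have finite index in $A$ and $B$, which holds because $H$ has finite index in $G$.

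The step I expect to require the most care is precisely the claim that $H\cap A$ has finite index in $A$ (and likewise for $B$): this follows from $\index GH<\infty$, since $\index A{H\cap A}\leqslant\index G H$. Everything else is a direct application of the unique root property exactly as in Lemmas~\ref{lem:aut} and~\ref{lem:restrict}. So the proof is short; the only conceptual point is recognizing that the hypothesis ``$\varphi$ is trivial on the traces of the two generating subgroups'' is enough to pin down any extension completely, because an extension must respect roots and the two subgroups together generate.
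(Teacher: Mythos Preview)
Your proof is correct and is essentially identical to the paper's: assume an extension $\psi$ exists, use $a^{n!}\in H\cap A$ (the paper takes $n=\index GH$ directly rather than $m=\index A{H\cap A}$, but this is immaterial) to get $\psi(a)^{n!}=a^{n!}$, extract roots to conclude $\psi\restrict A=\id$ and likewise $\psi\restrict B=\id$, hence $\psi=\id$, contradiction.
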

\begin{proof}
  Write $n=\index GH$, and let $\psi:G\to G$ be an extension of
  $\varphi$.  Take an arbitrary element $a\in A$. Then $a^{n!}\in
  H\cap A$, and so $\psi(a^{n!})=a^{n!}$. Since $G$ has the unique
  root property, we get $\psi(a)=a$, that is $\psi$ is the identity on
  $A$. Analogously $\psi$ is the identity on $B$, and hence $\psi=\id$,
  a contradiction.
\end{proof}

\begin{thm}\label{lem:free}
  Suppose that two nontrivial groups $A$ and $B$ have the unique root
  property, and at least one of them has finite quotients of
  arbitrarily large prime order. Then $\Comm(A\ast B)$ is not finitely
  generated.
\end{thm}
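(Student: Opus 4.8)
The plan is to reduce everything to Theorem~\ref{thm:infgen} applied to $G=A\ast B$. The first task is to check that $G$ has the unique root property, which that theorem requires. Since groups with unique roots are torsion-free, so is $G$, and it suffices to examine centralizers. In a free product the centralizer of a nontrivial element $z$ is either infinite cyclic (when $z$ is not conjugate into a factor) or of the form $gC_A(a)g^{-1}$ with $1\ne a\in A$, and symmetrically for $B$; this is standard Bass--Serre theory, the edge groups of the splitting $A\ast B$ being trivial. Given $x^n=y^n=z\ne 1$, both $x,y$ lie in $C_G(z)$: if this centralizer is cyclic the equation forces $x=y$ at once, and otherwise $g^{-1}xg$ and $g^{-1}yg$ lie in $A$ with equal $n$-th powers, so $x=y$ by the unique root property of $A$. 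Thus $G$ has the unique root property, in the spirit of Lemma~\ref{lem:urp}.

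Next, assume (say) that $A$ has finite quotients of arbitrarily large prime order; a quotient of prime order $p$ is necessarily isomorphic to $\Z/p\Z$, so for each of the infinitely many such $p$ fix an epimorphism $\phi\colon A\to\Z/p\Z$ and extend it to $\Phi\colon G\to\Z/p\Z$ by declaring $\Phi$ trivial on $B$. Let $H=\ker\Phi$, a normal subgroup of index $p$; then $H\cap A=\ker\phi=:A_0$, while $H\cap B=B$ because every conjugate of $B$ lies in $H$. I would then determine $H$ via the Kurosh subgroup theorem, equivalently by letting $H$ act on the Bass--Serre tree of $A\ast B$: the quotient graph of groups has one vertex carrying $A_0$, one vertex carrying a conjugate of $B$ for each of the $p$ cosets of $H$, and $p$ edges with trivial group; a count of vertices ($p+1$) and edges ($p$) shows the underlying graph is a tree, whence $H\cong A_0\ast B_0\ast\cdots\ast B_{p-1}$ with each $B_i$ a conjugate of $B$ and $B_0=H\cap B$.

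Finally I would produce an automorphism of $H$ that does not extend to $G$. Since $p\ge 2$, there is a free factor $B_1\ne B_0$; pick a nontrivial $b\in B_0$ and let $\varphi\in\Aut(H)$ be the partial conjugation acting as the identity on every free factor of $H$ except on $B_1$, where $\varphi(x)=bxb^{-1}$. Then $\varphi$ fixes $H\cap A=A_0$ and $H\cap B=B_0$ pointwise, but $\varphi\ne\id$ since a nontrivial element of the free factor $B_1$ cannot commute with the nontrivial element $b$ of the distinct free factor $B_0$. By Lemma~\ref{lem:extendAB}, applied to the generating pair $A,B$ of the unique-root group $G$, the automorphism $\varphi$ does not extend to an automorphism of $G$. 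As this construction is available for infinitely many primes $p$, Theorem~\ref{thm:infgen} gives that $\Comm(A\ast B)$ is not finitely generated. The step needing most care is the Kurosh/Bass--Serre bookkeeping identifying $H$ as $A_0\ast B^{\ast p}$ — in particular verifying that the newly appearing conjugates of $B$ are genuine free factors of $H$, so that the partial conjugation really is an automorphism; establishing the unique root property for $A\ast B$ is the other, smaller, preliminary point.
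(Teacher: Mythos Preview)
Your proof is correct and follows essentially the same route as the paper's: construct $H$ as the kernel of a map $G\to\Z/p\Z$ factoring through $A$, apply Kurosh to split $H$ as $(H\cap A)\ast(H\cap B)\ast C$ (the paper does not unpack $C$ as $B_1\ast\cdots\ast B_{p-1}$ the way you do), define a partial conjugation by a nontrivial $b\in H\cap B=B$ on an extra free factor, and then invoke Lemma~\ref{lem:extendAB} together with Theorem~\ref{thm:infgen}. The one point you supply that the paper leaves implicit is the verification that $A\ast B$ inherits the unique root property from $A$ and $B$, which is indeed needed both for Lemma~\ref{lem:extendAB} and for Theorem~\ref{thm:infgen}.
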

\begin{proof}
  Write $G=A\ast B$, and assume without loss of generality that $A$
  has arbitrarily large quotients.  Consider a normal subgroup
  $H\triangleleft G$ of finite index $n>1$ and containing $B$, e.g.\
  the kernel of the map $A\ast B\to Q\ast 1$ for a finite quotient $Q$
  of $A$.  By Kurosh's theorem, there exists a nontrivial splitting of
  the form $H=(H\cap A)\ast (H\cap B)\ast C$ with $C\neq 1$.  Let $b$
  be a nontrivial element of $H\cap B$; there is some, because $H\cap
  B=B$ is nontrivial.  Consider the automorphism $\varphi$ of $H$,
  which is the identity on $H\cap A$ and on $H\cap B$ and is
  conjugation by $b$ on $C$.

  By Lemma~\ref{lem:extendAB}, this $\varphi$ does not extend to $G$. We
  conclude by Theorem~\ref{thm:infgen}.
\end{proof}

This gives another proof of Corollary~\ref{cor:surf} for free groups
of rank $n\geq 2$: if $G=F_n$, take $A=\Z$ and $B=F_{n-1}$ and apply
Theorem~\ref{lem:free}. Another proof of Corollary~\ref{cor:surf} for surface groups
follows from Theorem~\ref{thm:HNN} or \ref{thm:freeprod}.

Note that the abstract commensurator of a free group admits an elegant
description through automata, see~\cite{MNS}. Lemma~\ref{lem:subindex}
essentially says that, given a finite collection of elements in the
commensurator of $F_m$, there exists a finite alphabet (with $n$
letters in the lemma's notation) such that these elements are
represented by automata on that alphabet.

\section{Groups splitting over $\Z$}
Following on Theorem~\ref{lem:free}, we now apply
Theorem~\ref{thm:infgen} to free products with amalgamation and HNN
extensions. In the proof
%of Theorem~\ref{thm:freeprod}
we will use certain automorphisms of $G$, called \emph{Dehn twists}.

\begin{lem}\label{lem:BS}
  Let $k$ be an integer, and consider the Baumslag-Solitar group
  \[G=\langle a,t\mid tat^{-1}=a^k\rangle.\]
  Then $\Comm(G)$ is not finitely generated.
\end{lem}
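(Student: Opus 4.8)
The plan is to reduce to the case $|k|\ge 2$ and then apply Theorem~\ref{thm:infgen}. For $k\in\{0,1\}$ the group $G$ is $\Z$ or $\Z^2$; each has the unique root property and, for every prime $p$, an index-$p$ subgroup isomorphic to itself (such as $p\Z\times\Z\le\Z^2$), so Proposition~\ref{prop:infgen} applies. For $k=-1$, the subgroup $\langle a,t^2\rangle$ is isomorphic to $\Z^2$ and has index $2$, so $\Comm(G)\cong\Comm(\Z^2)$ by Lemma~\ref{lem:comm} and the previous case settles it. Assume now $|k|\ge 2$; then $G\cong\Z[1/k]\rtimes\Z$, with the normal closure $\langle\langle a\rangle\rangle$ of $a$ identified with $\Z[1/k]$ and $t$ acting by multiplication by $k$. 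In this description a direct computation gives the unique root property: if $x^n=y^n$ then the $\Z$-coordinates of $x$ and $y$ agree, say both equal $\ell$, and then the $\Z[1/k]$-coordinates agree because multiplication by $n$ (if $\ell=0$) or by $1+k^\ell+\dots+k^{(n-1)\ell}$ (a nonzero element of $\Z[1/k]$, if $\ell\neq 0$) is injective on $\Z[1/k]$.

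For a prime $p$, set $H=\langle a,t^p\rangle$. This is the preimage of $p\Z$ under the quotient map $G\to G/\langle\langle a\rangle\rangle\cong\Z$, hence normal of index $p$; and since $\Z[1/k^p]=\Z[1/k]$ one checks $H=\langle\langle a\rangle\rangle\rtimes\langle t^p\rangle\cong\langle a,s\mid sas^{-1}=a^{k^p}\rangle$ via $a\mapsto a$, $t^p\mapsto s$. Let $\varphi\colon H\to H$ be the Dehn twist of $H$ given by $a\mapsto a$, $t^p\mapsto at^p$; its inverse fixes $a$ and sends $t^p\mapsto a^{-1}t^p$, so $\varphi$ is an automorphism, and $\varphi\neq\id$. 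The claim is that $\varphi$ does not extend to $\Aut(G)$ when $p$ is odd. Indeed, if $\psi\in\Aut(G)$ extended $\varphi$, then $\psi(a)=a$, so $\psi(t)a\psi(t)^{-1}=\psi(a^k)=a^k$. In $\Z[1/k]\rtimes\Z$ the elements $g$ satisfying $gag^{-1}=a^k$ are exactly those whose $\Z$-coordinate equals $1$ (here $|k|\ge 2$ is used, since $k^m=k$ forces $m=1$), hence $\psi(t)=a^v t$ for some $v\in\Z[1/k]$, and therefore $\psi(t^p)=(a^v t)^p=a^{v(1+k+\dots+k^{p-1})}t^p$. Comparing with $\varphi(t^p)=at^p$ yields $v(1+k+\dots+k^{p-1})=1$, so $1+k+\dots+k^{p-1}$ is a unit of $\Z[1/k]$. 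But this integer is congruent to $1$ modulo every prime dividing $k$, hence coprime to $k$, and has absolute value at least $3$ for every odd prime $p$ when $|k|\ge 2$; an integer coprime to $k$ with absolute value $>1$ is not a unit of $\Z[1/k]$ — a contradiction.

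Therefore, for each odd prime $p$, the group $G$ has a normal subgroup of index $p$ admitting an automorphism that does not extend to $G$, and Theorem~\ref{thm:infgen} shows that $\Comm(G)$ is not finitely generated. The main obstacle is the non-extension step: one has to identify precisely which elements of $G$ conjugate $a$ into $a^k$ (this is where the ascending-HNN structure and the hypothesis $|k|\ge 2$ are used) and then recognize $1+k+\dots+k^{p-1}$ as a non-unit of $\Z[1/k]$. The remaining ingredients — identifying $H$ with $\langle a,s\mid sas^{-1}=a^{k^p}\rangle$, the unique root property, and checking that $\varphi$ is an automorphism — are routine.
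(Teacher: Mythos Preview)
Your proof is correct but follows a genuinely different route from the paper's. The paper dispatches the lemma in two lines via Proposition~\ref{prop:infgen}: for $k\neq 0$ and any prime $p>k$, the endomorphism $\psi\colon a\mapsto a^p,\ t\mapsto t$ is injective with image $p\Z[1/k]\rtimes\langle t\rangle$ of index $p$, so $G$ contains a copy of itself of every large prime index. You instead invoke Theorem~\ref{thm:infgen} for $|k|\ge 2$: the normal subgroup $H=\Z[1/k]\rtimes\langle t^p\rangle$ carries the Dehn twist $t^p\mapsto at^p$, and any extension to $G$ would force $1+k+\cdots+k^{p-1}$ to be a unit of $\Z[1/k]$, which your coprimality-and-size argument rules out for odd $p$. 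Your approach is longer but is exactly the Dehn-twist mechanism that drives Theorem~\ref{thm:HNN}, so it makes the lemma feel less ad hoc within the paper. It is also more scrupulous about small $k$: you treat $k=-1$ separately via Lemma~\ref{lem:comm}, which is prudent since the Klein bottle group fails the unique root property (e.g.\ $(at)^2=t^2$), a hypothesis that Proposition~\ref{prop:infgen} formally requires. The paper's route, by contrast, is far more economical and uniform over all $k\neq 0$, exploiting the self-similarity of $BS(1,k)$ rather than constructing non-extendable automorphisms by hand.
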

\begin{proof}
  If $k=0$, then $G$ is infinite cyclic and the statement obviously
  holds; so assume $k\neq 0$. Let $p$ be a prime $>k$. Consider the
  endomorphism $\psi: G\to G$ sending $t$ to $t$ and $a$ to $a^p$. We
  prove that $\psi$ is injective, and that $\psi(G)$ has index $p$ in
  $G$; the conclusion then follows from Proposition~\ref{prop:infgen}.

  We have $G=\Z[1/k]\rtimes\langle t\rangle$, and $\psi$ is given by
  $\psi(x,t^i)=(px,t^i)$; so $\psi$ is an injective endomorphism. Its
  image is $p\Z[1/k]\rtimes\langle t\rangle$, which has index $p$
  because $p$ and $k$ are coprime.
\end{proof}

\begin{thm}\label{thm:HNN}
  Let $G=A\ast_C$, where $C$ is an infinite cyclic group. If $G$ has the
  unique root property, then $\Comm(G)$ is not finitely generated.
\end{thm}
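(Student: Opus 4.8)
Here $G = A\ast_C$ is an HNN extension of $A$ with associated infinite cyclic subgroups identified along an isomorphism; write $C = \langle c\rangle$ for one of the associated subgroups and $t$ for the stable letter, so $G = \langle A, t \mid t c_1 t^{-1} = c_2\rangle$ for certain generators $c_1,c_2$ of the two copies of $C$ inside $A$. The strategy is to verify the hypotheses of Theorem~\ref{thm:infgen}: for infinitely many primes $p$, produce a normal subgroup $H\triangleleft G$ of index $p$ carrying an automorphism $\varphi$ that does not extend to $G$. The automorphism $\varphi$ will be a \emph{Dehn twist} along the edge group of the splitting, in the sense alluded to in the passage above.

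First I would handle a degenerate case separately: if $A$ is itself infinite cyclic (so $G$ is a Baumslag–Solitar group $\langle a,t\mid tat^{-1}=a^k\rangle$ up to passing to the case $C$ equals all of $A$ or has finite index), invoke Lemma~\ref{lem:BS} directly. Otherwise $A$ properly contains $C$ on at least one side, and in particular $G$ has an element — namely the stable letter $t$, or an element of $A$ outside $C$ — witnessing the essential nature of the splitting. Next, I would construct the index-$p$ subgroups. Map $G \to \Z$ by sending $t\mapsto 1$ and $A\mapsto 0$ (possible since the edge group is the \emph{whole} associated subgroup on each side, so the HNN relation is respected); compose with $\Z \to \Z/p$ to get $H_p \triangleleft G$ of index $p$. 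By the structure theory of groups acting on the Bass–Serre tree, $H_p$ inherits a graph-of-groups decomposition: it is a fundamental group of a graph of groups whose edge groups are (conjugates of) $C$ or finite-index subgroups thereof, and in particular $H_p$ again splits over an infinite cyclic group $C' = \langle c^{?}\rangle$ lying in the appropriate vertex group. Crucially, the stable letter of $G$ acts on this tree with infinite orbits among the edges over the single edge of $G$'s splitting, so $H_p$'s splitting has an edge $e$ and a distinct translate $e'$ both mapping to the same edge downstairs.

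Now define $\varphi\colon H_p \to H_p$ to be the Dehn twist supported at the edge $e$: concretely, on the Bass–Serre tree for $H_p$, cut along $e$, and reglue after twisting by a fixed nontrivial element $\gamma$ of the infinite cyclic edge group $C'$. Since $C'$ is infinite cyclic and central in itself, this produces a genuine automorphism of $H_p$ fixing both vertex groups pointwise up to the twist; it is nontrivial because it alters the conjugacy relation across $e$. The key point — and the main obstacle — is to show $\varphi$ does not extend to an automorphism $\psi$ of $G$. The obstruction is exactly the one used in Corollary~\ref{cor:surf}: an extension $\psi$ would have to respect the $G$-tree structure and hence would twist \emph{all} edges over the single downstairs edge simultaneously (a global Dehn twist of $G$), so it would act on $e$ and on its translate $e'$ by conjugate twists; but $\varphi$ twists at $e$ and is the identity near $e'$, a contradiction. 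Making this rigorous requires the unique root property (to control how $\psi$ acts on cyclic edge groups and to rule out "partial" extensions, exactly as in Lemma~\ref{lem:extendAB}) together with the Bass–Serre normal form to pin down that an automorphism of $G$ extending something supported on one lift is forced to be a global twist. Once that contradiction is in hand, Theorem~\ref{thm:infgen} applies and $\Comm(G)$ is not finitely generated.
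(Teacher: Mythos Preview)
Your overall framework matches the paper's: pass to the kernel $H_p$ of $G\to\Z/p\Z$ sending $t\mapsto 1$, $A\mapsto 0$; realise $H_p$ as a graph of groups over a $p$-cycle (equivalently, after collapsing a spanning tree, as an HNN extension of an amalgam $K$ with stable letter $s=t^p$); and take $\varphi$ to be the Dehn twist $s\mapsto sd$. The ascending case is indeed disposed of by Lemma~\ref{lem:BS}.

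The gap is in your non-extension argument. You assert that an extension $\psi$ ``would have to respect the $G$-tree structure and hence would twist all edges over the single downstairs edge simultaneously,'' and that this is incompatible with $\varphi$ twisting at $e$ but not at $e'$. Neither half is justified. First, an automorphism of $G$ need not preserve the given splitting. What one actually obtains is: since $\varphi$ fixes $A$ and $tAt^{-1}$ pointwise, any extension $\psi$ fixes $A$ and satisfies $t^{-1}\psi(t)\in C_G(A)=Z(A)$ (in the non-ascending case), so $\psi(t)=ta$ with $a\in Z(A)$. But nothing yet forces $a\in C$, so $\psi$ need not be a Dehn twist. Second, even granting $\psi(t)=ta$, your ``local versus global'' comparison is not a proof: in the cycle decomposition the edge twists are not independent invariants one can read off separately, and Lemma~\ref{lem:extendAB} does not apply because $\varphi$ is \emph{not} the identity on $H_p\cap\langle t\rangle=\langle s\rangle$.

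The paper fills this step by direct computation. From $(ta)^n=t^n d$ one uses HNN normal forms to deduce $a\in C$ and $ad^{-1}\in D$; the unique root property then forces $c,d$ into a common cyclic group $\langle z\rangle$ with $c=z^l$, $d=z^r$, and the equation reduces to the arithmetic constraint $p(l^{n-1}+l^{n-2}r+\cdots+r^{n-1})=rl^{n-1}$, which for $n\ge 3$ and $\gcd(l,r)=1$ leaves only $l=-r=\pm1$, handled by first replacing $G$ by its index-$2$ subgroup. Your plan becomes a proof once this computation (or an equivalent argument) is supplied; as written, the crucial step is a heuristic.
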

\begin{proof}
  The group $G$ has the presentation $\langle A,t\,|\,
  t^{-1}Ct=D\rangle$, where $t$ is stable letter and $C=\langle
  c\rangle$, $D=\langle d\rangle$ are associated subgroups of $A$.

  Consider $n\geq 3$ and let $H_n$ be the kernel of the
  homomorphism $G\to\Z/n\Z$ sending $A$ to $0$ and $t$ to $1$. Then
  $H_n$ is also an HNN extension, which has the following
  presentation:
  \begin{align*}
    H_n &= \bigl\langle
    K,s\,\big|\,s^{-1}(t^{n-1}Ct^{1-n})s=D\bigr\rangle,\text{ where }\\
    K &= A\underset{\scriptscriptstyle C=tDt^{-1}}{\ast}
    tAt^{-1}\underset{\scriptscriptstyle tCt^{-1}=t^2Dt^{-2}}{\ast} t^2At^{-2}\ast\dots\underset{\scriptscriptstyle t^{n-2}Ct^{2-n}=t^{n-1}Dt^{1-n}}{\ast}t^{n-1}At^{1-n}
  \end{align*}
  and the stable letter $s$ corresponds to $t^n$ in $G$. Consider the
  automorphism $\varphi$ of $H_n$ which fixes the base $K$ of the HNN
  extension and sends $s$ to $sd$.  Suppose that $\varphi$ can be
  extended to an automorphism $\psi$ of $G$. Then, since
  $tAt^{-1}\leqslant K$, for any $a\in A$, we have
  $tat^{-1}=\varphi(tat^{-1})=\psi(tat^{-1})=\psi(t)\psi(a)\psi(t^{-1})=\psi(t)a\psi(t)^{-1}$,
  and so $t^{-1}\psi(t)\in C_G(A)$.

  Now either the HNN extension is ascending ($C=A$), in which case $G$
  is a Baumslag-Solitar group, and we are done by Lemma~\ref{lem:BS};
  or $C_G(A)=Z(A)$, and we get $\psi(t)=ta$ for some $a\in
  Z(A)\setminus \{1\}$.  We then have
  $t^nd=sd=\varphi(s)=\psi(t^n)=(ta)^n$; hence
  \begin{equation}\label{eq:1}
    \underbrace{t^{-1}(t^{-1}(\dots (t^{-1}(t^{-1}}_{n-1}(a)ta)ta)\dots )ta)tad^{-1}=1.\tag{$\dagger$}
  \end{equation}
Another cyclic form of this equation is
\begin{equation}\label{eq:2}
  tata\dots tat(ad^{-1})\underbrace{t^{-1}t^{-1}\dots
    t^{-1}t^{-1}}_{n-1}a=1.\tag{$\ddagger$}
\end{equation}
Using normal form in HNN extensions we deduce from~\eqref{eq:1} that
$a\in C$, and from~\eqref{eq:2} that $ad^{-1}\in D$.  Thus,
$a=c^p=d^q$ for some nonzero $p,q$. Since $a\in Z(A)$ and $Z(A)$ is
closed under taking roots (since $G$ has unique root property), we get
$c,d\in Z(A)$. In particular, $\langle c,d\rangle$ is a torsion-free
Abelian group satisfying $c^p=d^q$. Therefore this group is cyclic,
that is $c=z^l$ and $d=z^r$ for some $z\in Z(A)$ and $l,r\in \Z$.
Thus, we have
\begin{equation}\label{eq:3}
  a=z^{pl}\qquad\text{and}\qquad t^{-1}z^lt=z^r.\tag{$\mathchar "278$}
\end{equation}

We now analyze Equation~\eqref{eq:1} deeper. Using~\eqref{eq:3}, we
successively deduce
$$\begin{array}{l}
a=z^{pl},\\ t^{-1}(a)ta=z^{pl(1+(r/l))},\\
t^{-1}(t^{-1}(a)ta)ta=z^{pl(1+(r/l)+(r/l)^2)},\\
\vdots \\ \underbrace{t^{-1}(\dots (t^{-1}(t^{-1}}_{n-2}(a)ta)ta)\dots )ta=z^{pl(1+(r/l)+\dots +(r/l)^{n-2})},\\
\end{array}
$$

Finally, we obtain from~\eqref{eq:1} that

$$1=t^{-1}(t^{-1}(\dots (t^{-1}(t^{-1}(a)ta)ta)\dots )ta)tad^{-1}=z^{pl(1+(r/l)+\dots +(r/l)^{n-1})-r},$$

so

$$pl(1+(r/l)+\dots +(r/l)^{n-1})=r.$$

Equivalently,
$$p(l^{n-1}+rl^{n-2}+\dots +r^{n-1})=rl^{n-1}.$$

Note that $\gcd(r,l)=1$, otherwise, using the unique root property of
$G$, we could extract a root from $tz^lt^{-1}=z^r$ and get a wrong
equation. Hence $(l^{n-1}+rl^{n-2}+\dots +r^{n-1})$ has no nontrivial
common divisor neither with $r$, nor with $l$. Therefore
$(l^{n-1}+rl^{n-2}+\dots +r^{n-1})=\pm 1$.  Since $n\ge3$, this is
possible only if $l=1,r=-1$ or $l=-1,r=1$.  In that last case, $G$ has
the presentation $G=\langle A,t\,|\, t^{-1}zt=z^{-1}\rangle$. Then its
index $2$ subgroup $H_2$ has the presentation

$$H_2=\Bigl\langle \Bigl( A\underset{z=tz^{-1}t^{-1}}{\ast} tAt^{-1}\Bigr), s\,|\,
s^{-1}zs=z\Bigr\rangle,$$ where $s$ corresponds to $t^2$ in $G$. Thus,
if we replace $G$ by $H_2$ we will have $l=r=1$.  Thus, after possible
replacement, $\varphi$ cannot be extended to an automorphism of $G$
and we conclude by Theorem~\ref{thm:infgen}.
\end{proof}

\begin{lem}\label{prop:aut}
  Consider $G=G_1\ast_C G_2$, where $C$ is infinite cyclic. If $G_2$
  is Abelian, assume furthermore that $G_2=K\oplus L$ with $C\leqslant
  K$ and $|L|>2$.

  Then $G$ has a nontrivial automorphism $\varphi$ which fixes $G_1$.
\end{lem}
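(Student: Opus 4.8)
The plan is to build $\varphi$ as a \emph{Dehn twist} along the edge group $C$, which is the natural source of automorphisms for a group splitting over a cyclic subgroup. Recall that $G=G_1\ast_C G_2$ with $C=\langle c\rangle$ infinite cyclic, where we identify $C$ with a subgroup of $G_1$ and with a subgroup of $G_2$ via the amalgamation. First I would pick a nontrivial element $z$ that centralizes $C$ in $G_2$: if $G_2$ is nonabelian one may try to take $z\in Z_{G_2}(C)$ with $z\neq 1$ (the centralizer of the nontrivial element $c$ is nontrivial since it contains $c$ itself), and if $G_2$ is abelian the hypothesis $G_2=K\oplus L$ with $C\leqslant K$ and $|L|>2$ furnishes an element of $L$, so that conjugation by it on $G_2$ is trivial. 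The candidate automorphism is then $\varphi$ defined to be the identity on $G_1$ and conjugation by $z$ on $G_2$; since both maps agree on $C$ (as $z$ centralizes $c$), they glue by the universal property of the amalgamated product to give a well-defined endomorphism $\varphi:G\to G$, and it is an automorphism because conjugation by $z^{-1}$ gives the inverse on $G_2$-side — more precisely $\varphi$ is the map induced by $\id_{G_1}$ and $\operatorname{conj}_z|_{G_2}$, with inverse induced by $\id_{G_1}$ and $\operatorname{conj}_{z^{-1}}|_{G_2}$.

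The remaining task is to show $\varphi\neq\id$. This is where I expect the main obstacle to be, and where the abelian hypothesis is needed. In the nonabelian-$G_2$ case, if $z$ can be chosen \emph{not} to be central in all of $G_2$ then $\operatorname{conj}_z|_{G_2}\neq\id$ and hence $\varphi\neq\id$ on $G$ (using that $G_2$ embeds in $G$). In the abelian case conjugation by $z$ is trivial on $G_2$, so one cannot argue this way; instead one should check that $\varphi$ moves some element of $G$ arising from the free-product structure. The point is that $\varphi$ sends a generator $t$-like element — more concretely, take $g_1\in G_1\setminus C$ and $g_2\in G_2\setminus C$ and consider the element $g_1g_2$ in normal form. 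We have $\varphi(g_1g_2)=g_1\,z g_2 z^{-1}=g_1 g_2$ only if $z g_2 z^{-1}=g_2$, which holds since $G_2$ is abelian — so this particular element is fixed. A better witness: look at $g_1 z^{-1}g_2$ versus its image $g_1 z^{-1}\cdot z g_2 z^{-1}=g_1 g_2 z^{-1}$; these are equal iff $z^{-1}g_2=g_2 z^{-1}$, again automatic. So conjugation alone cannot distinguish elements, and one instead must use that $\varphi$ changes the \emph{coset representative} structure. The cleanest fix: choose the nontrivial $z\in L$, pick $g_1\in G_1\setminus C$, and compare $\varphi(g_1)=g_1$ with the requirement that an honest inner structure be preserved — since $z\notin C$ (as $z\in L\setminus\{1\}$ and $L\cap K=1\supseteq L\cap C$), the element $z$ itself lies in $G_2\leqslant G$, and $\varphi(z)=zzz^{-1}=z$, so $z$ is fixed too. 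The genuine argument: $\varphi$ fixes $G_1$ pointwise and fixes $G_2$ pointwise (abelian case), hence $\varphi=\id$ on $\langle G_1,G_2\rangle=G$ — which would be a contradiction.

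So in the abelian case the Dehn twist by an element \emph{of} $G_2$ is useless, and the correct construction must be a twist that does something nontrivial on the free-product level: define $\varphi$ on $G_1$ by conjugation by $c$ and on $G_2$ by the identity? That fails to glue unless we twist symmetrically. The right construction is the \emph{Dehn twist}: fix $z\in Z_{G_2}(C)\setminus C$ — and here the hypothesis $|L|>2$ is exactly what guarantees $L$, hence $Z_{G_2}(C)\supseteq L$, is strictly bigger than $C\cap Z_{G_2}(C)$ in a way that survives — then $\varphi=\id$ on $G_1$, $\varphi(x)=zxz^{-1}$ on $G_2$, and one shows $\varphi\neq\id$ by exhibiting that $\varphi$ acts nontrivially on $\pi_1$ of the underlying graph of groups, equivalently that the automorphism it induces on $H_1(G)$ or on the Bass–Serre tree is nontrivial; concretely $\varphi(g) = z g z^{-1} g^{-1}\cdot g$ for $g\in G_2$ shows $\varphi$ is inner-on-$G_2$ but the whole map is inner on $G$ only if $z$ is central in $G$, and $z$ is not central in $G$ because it does not centralize $G_1\setminus C$ — indeed if $z$ centralized some $g_1\in G_1\setminus C$ then $z$ would lie in a conjugate of a vertex group intersected appropriately, forcing $z\in C$, contradicting $z\in L\setminus\{1\}$. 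Thus $\varphi$ is a nontrivial (indeed non-inner) automorphism of $G$ fixing $G_1$, completing the proof; the one delicate verification, which I would write out carefully, is precisely this last claim that $z\notin C$ together with malnormality-type properties of the vertex groups forces $\varphi\neq\id$.
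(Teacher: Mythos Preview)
Your Dehn-twist approach --- take $z\in Z_{G_2}(C)$, set $\varphi|_{G_1}=\id$ and $\varphi|_{G_2}=\operatorname{conj}_z$ --- handles the non-abelian case correctly, and this is essentially what the paper does there too (conjugate by a generator of $C$ if $C\not\leqslant Z(G_2)$, or by some $g\in G_2\setminus Z(G_2)$ if $C\leqslant Z(G_2)$ but $G_2$ is non-abelian). The genuine gap is the abelian case, and you in fact spotted it yourself midway through: if $G_2$ is abelian then $\operatorname{conj}_z|_{G_2}=\id$ for \emph{every} $z$, so your $\varphi$ is the identity on both $G_1$ and $G_2$, hence on $G=\langle G_1,G_2\rangle$. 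Your final paragraph does not rescue this. You argue that $z$ is not central in $G$, but that only shows the \emph{inner} automorphism $\operatorname{conj}_z:G\to G$ is nontrivial; your $\varphi$ is a different map (it is $\id$ on $G_1$, not $\operatorname{conj}_z$ there), and in the abelian case it is literally the identity. Since $\id$ is inner, ``$\varphi$ is not inner'' cannot be established this way, and no Bass--Serre or $H_1$ argument will make the identity map nontrivial.

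The fix, which is what the paper does, is to abandon conjugation in the abelian case and instead build a nontrivial automorphism $\psi$ of $G_2$ fixing $C$ pointwise, then glue with $\id_{G_1}$. Write $G_2=K\oplus L$ with $C\leqslant K$. If $2L\neq 0$, take $\psi(x,y)=(x,-y)$; if $2L=0$ then $|L|>2$ forces $L$ to be an $\mathbb F_2$-vector space of dimension at least $2$, so choose any nontrivial linear automorphism $\psi'$ of $L$ and set $\psi(x,y)=(x,\psi'(y))$. Either way $\psi|_C=\id$ (since $C\leqslant K$) and $\psi\neq\id$ on $G_2$, so the glued map $\varphi$ is a nontrivial automorphism of $G$ fixing $G_1$. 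This is exactly where the hypothesis $|L|>2$ enters: if $|L|=2$ then $L\cong\Z/2\Z$ has no nontrivial automorphism and inversion is trivial, so no such $\psi$ exists.
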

\begin{proof}
  It is enough to define a nontrivial automorphism
  $\psi:G_2\rightarrow G_2$, such that $\psi\restrict C=\id$.  Then such
  $\psi$ can be obviously extended to the desired $\varphi$.

  If $C$ does not lie in $Z(G_2)$, we define $\psi$ as conjugation
  by a generator of $C$.  If $C$ lies in $Z(G_2)$ and $G_2$ is not
  Abelian, we take an element $g\in G_2\setminus Z(G_2)$ and define
  $\psi$ as conjugation by $g$. Consider finally $G_2$ Abelian,
  with $G_2=K\oplus L$. If $2L\neq0$, define $\psi:G_2\to G_2$ by
  $\psi(x,y)=(x,-y)$ for $x\in K,y\in L$; while if $2L=0$ then $L$ is
  an $\mathbb F_2$-vector space of dimension $>1$, so admits a
  non-trivial automorphism $\psi'$. Set then $\psi(x,y)=(x,\psi'(y))$.
\end{proof}

\begin{thm}\label{thm:freeprod}
  Let $G$ be $A\ast_C B$,  where $C$ is an infinite cyclic
  subgroup distinct from $A$ and $B$. Suppose that
  \begin{enumerate}
  \item $G$ has the unique root property;
  \item $A/C^A$ maps homomorphically onto $\Z$;
  \item if $B$ is Abelian, then $B$ maps homomorphically onto $\Z$.
  \end{enumerate}
  Then $\Comm(G)$ is not finitely generated.
\end{thm}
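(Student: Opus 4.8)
The plan is to invoke Theorem~\ref{thm:infgen}: it suffices to produce, for infinitely many primes $p$, a normal subgroup $H\triangleleft G$ of index $p$ carrying an automorphism that does not extend to $G$.

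I first use hypothesis~(2) to build $H$. Fix a surjection $\lambda\colon A\twoheadrightarrow\Z$ killing $C^A$ (so $C\leqslant\ker\lambda$), choose $a\in A$ with $\lambda(a)=1$, and for a prime $p$ let $\pi\colon G\to\Z/p\Z$ be the homomorphism equal to $\lambda\bmod p$ on $A$ and to $0$ on $B$; this is well defined (it kills $C$) and onto, so $H:=\ker\pi$ is normal of index $p$ and contains $B$. Analysing the action of $H$ on the Bass--Serre tree $T$ of $G=A\ast_C B$, one sees that $H\backslash T$ is a star: a central vertex with group $A_0:=H\cap A$ (of index $p$ in $A$), and $p$ leaves with groups the conjugates $a^iBa^{-i}$ for $0\leqslant i\leqslant p-1$, the $i$-th edge carrying the group $C_i:=a^iCa^{-i}\cong\Z$, included by inclusion into both $A_0$ and $a^iBa^{-i}$. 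Hence $H\cong A_0\ast_{C_0}B\ast_{C_1}aBa^{-1}\ast\cdots\ast_{C_{p-1}}a^{p-1}Ba^{1-p}$, with $H\cap A=A_0$ and $H\cap B=B$.

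Next I produce a nontrivial automorphism $\varphi$ of $H$ which is the identity on $H\cap A=A_0$ and on $H\cap B=B$; since $A\cup B$ generates $G$, Lemma~\ref{lem:extendAB} then shows $\varphi$ does not extend to $G$, and Theorem~\ref{thm:infgen} concludes. Write $H=H'\ast_{C_1}(aBa^{-1})$, where $H'$ is the rest of the star (so $A_0,B\leqslant H'$ and $C_1\leqslant A_0\leqslant H'$), and apply Lemma~\ref{prop:aut} to this amalgam: it yields a nontrivial automorphism fixing $H'$, hence in particular $A_0$ and $B$, pointwise, which is the desired $\varphi$. If $B$ (and thus $aBa^{-1}$) is non-Abelian, Lemma~\ref{prop:aut} applies verbatim. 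If $B$ is Abelian one needs $B=K\oplus L$ with $C\leqslant K$ and $|L|>2$; hypothesis~(3) provides this whenever a copy of $\Z$ splits off $B$ transversally to $C$, which, after possibly replacing $G$ by the finite-index subgroup $A\ast_C B'$ with $B'=\mu^{-1}(\mu(c)\Z)$ for a surjection $\mu\colon B\twoheadrightarrow\Z$ (harmless by Lemma~\ref{lem:comm}), holds unless $B$ is of one of the very small types $\Z$ or $\Z\oplus\Z/2\Z$.

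The step I expect to be the main obstacle is exactly this residual Abelian case, where no such decomposition of $B$ exists and the leaves of $H$ can be rigid — e.g.\ when $B\cong\Z$, the unique root property forces each leaf to be fixed once $A_0$ is. There one should instead build a non-extendable automorphism out of the large central group $A_0$: by hypothesis~(2), $A_0$ surjects onto $\Z$ with every $C_i$ in the kernel, and one looks for a transvection-type automorphism of $A_0$ that fixes each $C_i$ pointwise but sends $a^p$ to an element of $A_0$ which, by a grading argument with $\lambda$, cannot be a $p$-th power in $A$; such an automorphism cannot extend over $A$, hence not over $G$. Verifying that it always exists under hypothesis~(2) is the delicate point.
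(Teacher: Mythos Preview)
Your main construction --- the index-$p$ normal kernel $H$ of a map $G\to\Z/p\Z$ that is $\lambda\bmod p$ on $A$ and trivial on $B$, its star-shaped graph-of-groups decomposition with centre $A_0=H\cap A$ and leaves $a^iBa^{-i}$, then Lemma~\ref{prop:aut} applied to one outer leaf followed by Lemma~\ref{lem:extendAB} --- is exactly the paper's argument.

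The gap you flag is real, and the paper closes it \emph{not} by your proposed transvection in $A_0$ (which you rightly call delicate: nothing in hypothesis~(2) produces automorphisms of $A_0$, and an extension $\Psi$ of such a $\varphi$ to $G$ need not preserve $A$, so the non-$p$-th-power obstruction does not obviously bite). Instead the paper performs a preliminary finite-index reduction that eliminates the bad case altogether. The only residual case (torsion-freeness rules out $\Z\oplus\Z/2\Z$) is $B\cong\Z$, equivalently $n:=|B:C|<\infty$. One then takes the surjection $G\to B/C\cong\Z/n\Z$ that is trivial on $A$; its kernel is the iterated amalgam $A\ast_C A^{b}\ast_C\cdots\ast_C A^{b^{n-1}}$ over the common subgroup $C$, which one regroups as $A\ast_C B_1$ with $B_1=A^{b}\ast_C V$. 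Now $|B_1:C|\geq |A^{b}:C|=|A:C|=\infty$ (the last because $A/C^A$ surjects onto $\Z$), and $B_1/V^{B_1}\cong A/C^A$ shows condition~(3) persists. After this, and the analogous reduction securing the splitting hypothesis of Lemma~\ref{prop:aut} in the Abelian case, your star construction runs cleanly for every prime $p$.

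One smaller correction: in your Abelian reduction, the subgroup you pass to is \emph{not} $A\ast_C B'$. The kernel of $G\to B/B'$ is a star with central vertex group $B'$ and $|B:B'|$ leaves conjugate to $A$; regrouped it is $B'\ast_{C}A_1$ with $A_1=A\ast_C V$, and one must then re-verify~(2) for $A_1$ (it holds since $A_1$ surjects onto $A_1/V^{A_1}\cong A/C^A$). The paper carries out this bookkeeping.
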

Note that (2) is satisfied as soon as $G$ maps onto $\Z\times\Z$,
and~(3) is satisfied as soon as $B$ is finitely generated.

\begin{proof}
  We first show that we may assume additionally that the following
  condition is satisfied:
  \[\tag{4} |B:C| \text{ is infinite.}\]

  Suppose that the index $|B:C|$ is finite, so $B$ is virtually
  cyclic. Since $G$ is torsion-free, $B$ is infinite cyclic.  Let
  $1,b,b^2,\dots ,b^{n-1}$ be representatives of $B$ modulo $C$. Note
  that $n\geq 2$, since $B\neq C$.  Let $\varphi:A\ast_C
  B\rightarrow \Z/n\Z$ send $A$ onto $\Z/n\Z$ and $B$ to $0$.  The
  kernel $G_1$ of $\varphi$ can be presented as the free
  product of groups $b^{-i}Ab^i$ for $i\in\{0,1,\dots, n-1\}$,
  amalgamated over the common subgroup $C$. Therefore
  $G_1=A\ast_CB_1$, where $B_1$ is the free product of $A^{b^i}$ for
  $i\in\{1,\dots, n-1\}$, amalgamated over $C$.  Then $B_1=A^b\ast_C
  V=A^b\ast_{C^b} V$ for some group $V$. It follows $B_1/V^{B_1}\cong
  A/C^A$ and so $B_1$ satisfies Condition~(3). Moreover, $B_1$
  satisfies Condition~(4), since $B_1$ contains $A^b$ and
  $|A^b:C|=|A:C|$ is infinite.  Since $G_1$ has finite index in $G$,
  we have $\Comm(G)\cong \Comm(G_1)$.  Therefore, replacing $G$ by
  $G_1$ if necessary, we may assume that Conditions~(1--4) are
  satisfied.

  We then show that we may assume additionally that the following
  condition, which in required in Lemma~\ref{prop:aut}, is satisfied:

  \[\tag{5}\text{if $B$ is Abelian, then $B=K\oplus L$, 
    with $C\leqslant K$ and $|L|>2$.}\]

  Suppose that $B$ is Abelian. By Condition~(3), there is an
  epimorphism $\psi: B\rightarrow\Z$. Thus, $B=\ker\psi\oplus\Z$. If
  $C\leqslant \ker\psi$, we are done. If $C\not\leqslant\ker\psi$,
  then $\psi(C)$ has finite index in $\psi(B)$. Since $C$ is infinite
  cyclic, we have $\ker\psi \cap C=\{0\}$.  Denote $B_1=\langle
  \ker\psi, C\rangle$. Then $B_1=\ker\psi\oplus C$ and the index
  $n=|B:B_1|$ is finite. Hence $B_1$ satisfies Conditions~(3--4).  In
  particular, $\ker\psi$ is infinite. Therefore, $B_1$ satisfies
  Condition~(5).

  If $n=1$, then $B=B_1$, and so $B$ satisfies Conditions (3--5).
  Suppose then $n\geq 2$ and let $T=\{b_1,\dots,b_n\}$ be a
  transversal of $B_1$ in $B$.  Consider $H=\langle
  A,B_1\rangle^G$. Then $H$ has index $n$ in $G$; hence $\Comm
  (G)\cong \Comm (H)$.  Moreover, $T$ is a transversal of $H$ in $G$.
  Consider the induced decomposition of $H$ as the fundamental group
  of a graph of groups (see~\cite{Se}): it has the shape of a star;
  there is a central vertex with vertex group $B_1$ and $n$ outer
  vertices with vertex groups $A^b$ for $b\in T$. All edge groups are
  $C$. We can rewrite this decomposition in the form $H= B_1\ast_C
  A_1$, where $A_1=A\ast_C V $ for some $V$. The group $A_1/C^{A_1}$
  can be mapped homomorphically onto $A_1/V^{A_1}=A/C^A$ and so
  Condition~(2) is satisfied by $A_1$.
  
  In summary, without loss of generality we assume that
  Conditions~(1--5) are satisfied for the original $G$.

  We now show that for any prime number $p>1$, there exists a normal
  subgroup $H$ of index $p$ in $G$, and an automorphism of $H$ that
  does not extend to an automorphism of $G$.  Then
  Theorem~\ref{thm:infgen} will complete this proof.

  By~(2), the quotient group $A/C^A$ can be homomorphically mapped
  onto $\Z$ and further onto $\Z/p\Z$. Let $N\triangleleft A$ be the
  kernel of the composition of these epimorphisms, and set $H=\langle
  N,B\rangle^G$. Then $C\leqslant H\triangleleft G$ and $\index
  GH=p$. Consider the induced decomposition of $H$ as the fundamental
  group of a graph of groups: it has the shape of a star; there is a
  central vertex with the vertex group $N$ and $p$ outer vertices with
  the vertex groups $B^a$ for $a$ in a transversal of $N$ in $A$.

  In particular, $H=U\ast_{C^a} B^a$ for some $a\notin A$ and some
  subgroup $U$ containing $B$ and $N$. By Lemma~\ref{prop:aut}, there
  is a non-trivial automorphism $\varphi$ of $H$ fixing $U$. We
  conclude by Lemma~\ref{lem:extendAB} that $\varphi$ cannot be
  extended to an automorphism of~$G$.
\end{proof}

\noindent To prove Corollary~\ref{cor:2}, we recall a theorem by Paulin:
\begin{thm}[\cite{Pau}]\label{thm:paulin}
  Suppose $G$ is a word-hyperbolic group with infinite $\Out(G)$. Then
  $G$ splits over a virtually cyclic group.
\end{thm}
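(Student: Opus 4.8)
The plan is to argue by contradiction, using the action of $\Out(G)$ on a space of $G$-trees and a compactness/degeneration argument in the style of Bestvina and Paulin. Suppose $\Out(G)$ is infinite; I want to produce a splitting of $G$ over a virtually cyclic subgroup. First I would fix a finite generating set $S$ of $G$ and recall that $G$ acts properly cocompactly on its Cayley graph, which (being quasi-isometric to any geodesic metric space on which $G$ acts so) can be replaced by a $\delta$-hyperbolic geodesic space $X$; a convenient choice is the Rips complex $P_d(G)$ for $d$ large, on which $G$ acts properly, cocompactly, and by isometries, with $X$ hyperbolic. Each $\alpha\in\Out(G)$, lifted to some $\widehat\alpha\in\Aut(G)$, twists the action: define a new action $\rho_\alpha$ of $G$ on $X$ by $g\cdot_\alpha x = \widehat\alpha(g)\cdot x$. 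Rescaling the metric by the factor $\lambda_\alpha = \max_{s\in S}\,|\widehat\alpha(s)|$ (word length, equivalently displacement of a basepoint), one gets actions on uniformly hyperbolic spaces with a basepoint moved a bounded amount.

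The key step is the degeneration: since $\Out(G)$ is infinite, one can choose a sequence $\alpha_n$ with $\lambda_{\alpha_n}\to\infty$ (otherwise the $\widehat\alpha_n$ would have uniformly bounded displacement on a finite generating set, forcing $\Out(G)$ finite because $G$ acts properly cocompactly, so there are only finitely many possibilities for the images of the generators up to conjugacy). Applying the Bestvina--Paulin construction, the rescaled pointed spaces $(X,\tfrac1{\lambda_{\alpha_n}}d, x_0, \rho_{\alpha_n})$ converge, in the equivariant Gromov--Hausdorff topology, to a nontrivial isometric action of $G$ on a real tree $T$ (nontriviality comes from $\lambda_{\alpha_n}\to\infty$ together with the basepoint being moved a bounded rescaled amount; the limit is an $\mathbb R$-tree because ultralimits of uniformly hyperbolic spaces with the rescaling $\delta/\lambda_n\to0$ are $0$-hyperbolic and geodesic). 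One must check the action has no global fixed point and that arc stabilizers are small: here is where hyperbolicity of $G$ enters decisively — two independent loxodromic isometries of $X$ cannot both have long commuting-like behaviour, so in the limit the stabilizer of any nondegenerate arc is virtually cyclic (in fact one shows it cannot contain a nonabelian free subgroup, and in a hyperbolic group a subgroup with no nonabelian free subgroup and fixing an arc is virtually cyclic).

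Finally I would invoke the Rips machine (Bestvina--Feighn's analysis of stable actions of finitely presented groups on $\mathbb R$-trees): a finitely presented group acting nontrivially, stably, on an $\mathbb R$-tree with virtually-cyclic arc stabilizers either splits over a virtually cyclic subgroup, or the action is "geometric" of surface or toral type, each of which again yields a splitting of $G$ over a virtually cyclic (indeed cyclic or $2$-ended) subgroup; word-hyperbolic groups are finitely presented, so this applies. Stability of the limit action has to be arranged — this is the standard technical point — either by passing to a further subsequence/limit or by appealing to the version of the Rips machine (Guirardel) that handles the general superstable case; in the hyperbolic setting the limit trees are automatically "small with cyclic-or-trivial tripod stabilizers", which is enough. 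I expect the main obstacle to be exactly this verification that the limiting $\mathbb R$-tree action is nontrivial \emph{and} has virtually cyclic arc stabilizers with the stability needed to feed the Rips machine: the nontriviality requires the careful bookkeeping of basepoint displacement under rescaling, and the control on arc stabilizers is where one genuinely uses that $G$ is hyperbolic rather than merely finitely presented. Since this is a cited result of Paulin~\cite{Pau}, I would present the above as a sketch and refer to~\cite{Pau} (and to Bestvina's survey on $\mathbb R$-trees and the Bestvina--Feighn and Rips--Sela papers) for the full details.
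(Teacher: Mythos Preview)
Your sketch is a faithful outline of the standard Bestvina--Paulin argument together with the Rips machine, and it is essentially how Paulin's theorem is proved in the literature. However, you should note that the paper itself gives \emph{no proof} of this statement: it is quoted verbatim as a result of Paulin~\cite{Pau} and used as a black box in the proof of Corollary~\ref{cor:2}. So there is nothing to compare your argument against in this paper.

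On the content of your sketch: the overall strategy is correct, and you have identified the genuine pressure points (nontriviality of the limit action, smallness of arc stabilizers, and stability for the Rips machine). Two small remarks. First, Paulin's original paper~\cite{Pau} establishes the nontrivial small $\mathbb R$-tree action but predates the Bestvina--Feighn version of the Rips machine; the passage from ``small action on an $\mathbb R$-tree'' to ``splitting over a virtually cyclic subgroup'' is due to Rips and Bestvina--Feighn and should be cited separately if you are writing this out in full. Second, your parenthetical that arc stabilizers ``cannot contain a nonabelian free subgroup, and in a hyperbolic group a subgroup with no nonabelian free subgroup \dots\ is virtually cyclic'' is the right idea, but as stated it needs the extra hypothesis that the subgroup is not finite or trivial; more to the point, what one actually shows is that arc stabilizers are \emph{virtually abelian} (two elements fixing a common arc in the limit must have been nearly commuting in $X$), and then hyperbolicity forces virtually abelian to mean virtually cyclic. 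None of this affects the correctness of your outline.
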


\begin{proof}[Proof of Corollary~\ref{cor:2}]
  By Theorem~\ref{thm:paulin}, $G$ splits over a virtually cyclic
  subgroup, that is $G=A\ast_C B$ or $G=A\ast_C$, where $C$ is
  virtually cyclic. Since $G$ is torsion-free, $C=1$ or $C=\Z$. If
  $C=1$, we apply Theorem~\ref{lem:free}. If $G$ is an HNN extension,
  we apply Theorem~\ref{thm:HNN}.

  If $C=\Z$ then, since $G$ maps onto $\Z^2$, its quotient $G/C^G$
  maps onto $\Z$. Since $G/C^G=A/C^A\ast B/C^B$, one of the groups
  $A/C^A$ or $B/C^B$ maps onto $\Z$. If $A$ or $B$ is Abelian, it is
  cyclic, since $G$ is a torsion-free hyperbolic group. We conclude by
  Theorem~\ref{thm:freeprod}.
\end{proof}

\section{Examples}
We conclude in this section with a few examples showing that
additional conditions are required on a hyperbolic group or on a free
product with amalgamation to ensure that its commensurator is
infinitely generated.

The following construction was generously indicated to us by
Marc Lackenby. Consider a complicated-enough knot $K\subset S^2\times
S^1$; namely, the mapping torus of a complicated-enough braid $\tilde
K\subset S^2\times[0,1]$. Let $\mu$ be a small loop in $S^2\times
S^1\setminus K$ around $K$.

Set $\Delta=\pi_1(S^2\times S^1\setminus K)$. Then, for $n$ large
enough, $\Gamma:=\Delta/\langle\mu^n\rangle^\Delta$ is
hyperbolic~\cite{Boi}, and is a non-arithmetic lattice in
$G:=\operatorname{PSL}_2(\mathbb C)$. By rigidity (see the
Introduction), $\Comm(\Gamma)=\Comm_G(\Gamma)$; and
by~\cite[Theorem~IX.1.B]{Ma}, $\Gamma$ has finite index in
$\Comm(\Gamma)$, so in particular $\Comm(\Gamma)$ is finitely
generated. We deduce:
\begin{thm}[=Theorem~\ref{cor:Z}(1)]
  There exist word-hyperbolic groups with finitely generated
  commensurator.
\end{thm}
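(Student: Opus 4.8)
The plan is to establish the statement through one explicit group: a \emph{non-arithmetic} cocompact lattice $\Gamma$ in $G=\operatorname{PSL}_2(\mathbb C)$. Granting such a $\Gamma$, the two rigidity facts recalled in the Introduction finish the argument. First, Mostow--Prasad rigidity --- applicable since $G\neq\SL(2,\mathbb R)$ --- shows that every isomorphism between finite-index subgroups of $\Gamma$ is the restriction of conjugation by a unique isometry of $\mathbb H^3$ (uniqueness, because a nontrivial isometry centralising a finite-index subgroup of $\Gamma$ would fix the entire boundary sphere, hence be trivial). This identifies $\Comm(\Gamma)$ with $\Comm_G(\Gamma)$, up to an irrelevant index-two extension of $G$ by an orientation-reversing isometry. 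Second, Margulis' arithmeticity criterion \cite[Theorem~IX.1.B]{Ma} asserts that a non-arithmetic irreducible lattice has \emph{finite} index in its relative commensurator, so $[\Comm_G(\Gamma):\Gamma]<\infty$. Since $\Gamma$ is a cocompact lattice it is finitely generated, and a group containing a finitely generated subgroup of finite index is itself finitely generated; hence $\Comm(\Gamma)$ is finitely generated.

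It then remains to produce such a $\Gamma$ that is moreover word-hyperbolic, and here I would use the construction pointed out by Lackenby. Starting from a sufficiently complicated braid, let $K\subset S^2\times S^1$ be its mapping torus, a fibered knot; put $\Delta=\pi_1(S^2\times S^1\setminus K)$ and let $\mu$ be a meridian of $K$. For $n$ large the orbifold Dehn filling $\Gamma=\Delta/\langle\mu^n\rangle^\Delta$ is the fundamental group of a closed hyperbolic $3$-orbifold, by the cone-manifold form of Thurston's hyperbolic Dehn surgery theorem~\cite{Boi}; thus $\Gamma$ acts properly discontinuously and cocompactly by isometries on $\mathbb H^3$, so it is a cocompact lattice in $G$ and, acting geometrically on the negatively curved space $\mathbb H^3$, is word-hyperbolic. (If a torsion-free example is wanted, one can instead fill along an honest slope to get a closed hyperbolic $3$-manifold; the rest of the argument is unchanged.)

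The one genuinely delicate point is the non-arithmeticity, and this is where the hypothesis ``complicated enough'' earns its keep; everything else is standard Thurston theory or a quoted theorem. I would argue by counting: a hyperbolic knot exterior $\Delta$ admits, for all large $n$, an orbifold filling $\Delta/\langle\mu^n\rangle^\Delta$ of covolume strictly below the fixed bound $\operatorname{vol}(\mathbb H^3/\Delta)$, and these are pairwise non-isometric (their cone angles $2\pi/n$ differ) hence, by Mostow rigidity, have pairwise non-isomorphic fundamental groups. By Borel's finiteness theorem there are only finitely many arithmetic lattices in $G$ of covolume below any given bound, so all but finitely many of these fillings are non-arithmetic, and we select one. (Alternatively one computes the invariant trace field and quaternion algebra of $\Gamma$ directly and checks that the arithmeticity conditions fail.) This supplies the required $\Gamma$ and completes the proof.
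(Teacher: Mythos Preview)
Your argument is correct and follows the same route as the paper: Lackenby's orbifold Dehn filling $\Gamma=\Delta/\langle\mu^n\rangle^\Delta$ is a non-arithmetic cocompact lattice in $\operatorname{PSL}_2(\mathbb C)$, Mostow rigidity identifies $\Comm(\Gamma)$ with the relative commensurator, and Margulis' criterion gives finite index. The paper simply asserts non-arithmeticity as part of ``complicated enough'' and cites~\cite{Boi}; your counting argument via Borel's finiteness theorem (infinitely many pairwise non-isometric fillings of bounded volume, only finitely many arithmetic lattices below that bound) is a clean way to make this explicit, and your parenthetical about filling along an honest slope to obtain a torsion-free example is a useful strengthening the paper does not mention --- it explicitly notes that its $\Gamma$ has torsion.
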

(Note of course that $\Gamma$ is not torsion-free).

Recall that a group $G$ is called {\it complete} if it has trivial
center and no outer automorphisms.  A group is called {\it perfect} if
it equals its own commutator subgroup. A subgroup $C$\ of a group $G$
is called {\it malnormal} if $C\cap g^{-1}Cg=1$ for every $g\in
G\setminus C$.  We will use the following result of V.N.~Obraztsov
(see Corollary~3 in~\cite{Obr} and its proof).

\begin{thm}[\cite{Obr}]\label{thm:obr} There exists a 2-generated
  simple complete torsion-free group $G$ in which every proper
  subgroup is infinite cyclic.
\end{thm}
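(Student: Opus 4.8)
Since this is a result of Obraztsov I only sketch how one would construct such a $G$ from scratch, along the lines of Ol'shanskii-style monster-group constructions. The plan is to realize $G$ as a direct limit $G=\varinjlim_i G_i$ of torsion-free hyperbolic groups. Start from the free group $G_0=F(a,b)$ of rank $2$; at each step form $G_{i+1}=G_i/\langle\!\langle R_{i+1}\rangle\!\rangle$ by adjoining a finite set $R_{i+1}$ of cyclically reduced relators, none of them a proper power, satisfying a graded small cancellation condition over the hyperbolic group $G_i$. The standard machinery then guarantees that each $G_i$ is again infinite, torsion-free and hyperbolic, that $G_i\to G_{i+1}$ is injective on any prescribed finite set, and therefore that $a$ and $b$ survive to distinct nontrivial elements of the limit $G$.

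The relators are chosen, along a bookkeeping enumeration, to accomplish three tasks. \emph{(i) Subgroups.} Whenever a pair $(u,v)$ of elements of the group built so far generates a proper non-cyclic subgroup, adjoin a long relator in $u,v$ that forces their images into a common cyclic subgroup (or makes $\langle u,v\rangle$ the whole group); iterating over all such pairs makes every two-generated subgroup of $G$ cyclic, and together with the absence of infinite ascending chains of cyclic subgroups (a consequence of unique root extraction in the approximating hyperbolic groups) this makes every proper subgroup of $G$ infinite cyclic. \emph{(ii) Simplicity.} For each nontrivial element and each of the two generators, adjoin a relator expressing that generator as a product of conjugates of the element; iterating makes the normal closure of every nontrivial element equal to $G$, so $G$ is simple, hence non-Abelian, hence $Z(G)=1$ automatically. \emph{(iii) Completeness.} To eliminate outer automorphisms, impose by further relators a rigidity property forcing every generating pair of $G$ to be conjugate to the distinguished pair $(a,b)$; since an automorphism is determined by the images of $a$ and $b$, this yields $\Aut(G)=\operatorname{Inn}(G)$.

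The heart of the matter --- and the main obstacle --- is to check that at every stage relators with the prescribed algebraic effect \emph{and} the required small-cancellation behaviour (both inside $R_{i+1}$ and against the relators accumulated at all earlier stages, i.e. Ol'shanskii's graded condition) do exist, and that the infinite iteration does not degenerate: using van Kampen diagrams over $G_i$ and estimates on contiguity subdiagrams one must verify that each new relator has ``small overlap with the history'', so that $G$ stays infinite and the generators stay nontrivial. Torsion-freeness is then preserved for free, since only non-proper-power relators are ever adjoined. The most delicate bookkeeping issue is to interleave task (iii) with tasks (i) and (ii) without spoiling the uniform hyperbolicity estimates that keep the whole construction alive.
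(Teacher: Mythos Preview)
The paper does not give its own proof of this statement: it is quoted as a result of Obraztsov (Corollary~3 of~\cite{Obr}) and used purely as a black box in the construction of Theorem~\ref{thm:example}. So there is no argument in the paper against which to compare your attempt.

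That said, your sketch is a fair high-level outline of Ol'shanski\u\i's graded small-cancellation method, and that is indeed the machinery underlying Obraztsov's paper, so you are pointing in the right direction. One small wrinkle in your outline: the claim that the absence of infinite ascending chains of cyclic subgroups in $G$ is ``a consequence of unique root extraction in the approximating hyperbolic groups'' is not quite the right justification. Unique roots in each $G_i$ (and hence in $G$) do not by themselves rule out an element of $G$ acquiring roots of unbounded order as relators accumulate. The clean argument---which the present paper records immediately after stating the theorem---is that an infinite strictly ascending union of cyclic subgroups is locally cyclic but not cyclic, hence (by the property you have already arranged) must equal $G$, contradicting that $G$ is $2$-generated and non-Abelian. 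Apart from this, and the inevitable vagueness about how task~(iii) is interleaved with~(i) and~(ii) without destroying the graded estimates, your plan matches the standard construction.
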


We note that such a group $G$ has maximal cyclic subgroups; indeed
otherwise it would contain an infinite ascending sequence of cyclic
subgroups; its union cannot be cyclic, and so it must coincide with
$G$. This is impossible since $G$ is finitely generated.

\begin{lem}\label{thm:urp} Let $G$ be a group as in
  Theorem~\ref{thm:obr}. Then every maximal cyclic subgroup of $G$ is
  malnormal. Moreover, $G$ has the unique root property.
\end{lem}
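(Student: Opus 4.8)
The plan is to lean on two structural features of Obraztsov's group: it is complete, so $Z(G)=1$, and every proper subgroup is infinite cyclic. The single fact that makes everything run is the following: for any $x\neq 1$ the centralizer $C_G(x)$ is a \emph{proper} subgroup of $G$ — otherwise $x$ would be central — and hence $C_G(x)$ is infinite cyclic. From this the ``moreover'' clause is immediate: $G$ is torsion-free by hypothesis and has cyclic centralizers of all nontrivial elements, so Lemma~\ref{lem:urp} applies and $G$ has the unique root property.

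For the malnormality statement, the first step is to pin down what a maximal cyclic subgroup is. If $C$ is maximal cyclic and $1\neq x\in C$, then $C\leqslant C_G(x)$ because $C$ is abelian, while $C_G(x)$ is cyclic by the observation above; maximality of $C$ then forces $C=C_G(x)$. Two consequences: (i) if two maximal cyclic subgroups $C_1,C_2$ share a nontrivial element $x$, then $C_1=C_G(x)=C_2$; and (ii) any conjugate $g^{-1}Cg$ of a maximal cyclic subgroup is again maximal cyclic, since conjugation is an automorphism of $G$.

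Now let $C$ be maximal cyclic and suppose $g\in G\setminus C$ with $C\cap g^{-1}Cg\neq 1$. By (ii) the subgroup $g^{-1}Cg$ is maximal cyclic, so by (i) we get $g^{-1}Cg=C$, i.e.\ $g\in N_G(C)$. It remains to see $N_G(C)=C$: the subgroup $N_G(C)$ contains $C$ with $1\neq C\neq G$, so by simplicity $N_G(C)\neq G$, hence $N_G(C)$ is cyclic, and containing the maximal cyclic subgroup $C$ it must equal $C$. Therefore $g\in C$, a contradiction, and $C$ is malnormal. I do not expect a real obstacle here; the one place to be careful is the identification $C=C_G(x)$, which is exactly where ``maximal cyclic'' meets ``every proper subgroup is cyclic'', together with the two uses of the hypotheses to avoid the whole group: completeness rules out $C_G(x)=G$, and simplicity rules out $N_G(C)=G$.
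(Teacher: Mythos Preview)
Your proof is correct, and it takes a genuinely different route from the paper's argument.

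The paper argues malnormality by a direct case split: from $z^s=g^{-1}z^tg$ and $g\notin\langle z\rangle$ it first observes $\langle g,z\rangle=G$ (since it is noncyclic, hence not proper), and then separately handles $g^{-1}zg\notin\langle z\rangle$ (forcing $z^s\in Z(G)$) and $g^{-1}zg=z^k$ (forcing either $\langle z\rangle$ non-maximal or $g^2\in Z(G)$). For the unique root property the paper again argues directly: if $\langle x,y\rangle$ is cyclic one is done, otherwise $\langle x,y\rangle=G$ and $x^n\in Z(G)=1$.

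Your argument is more structural. You isolate once and for all that $C_G(x)$ is cyclic for every $x\neq 1$, which lets you invoke Lemma~\ref{lem:urp} for the unique root property rather than reprove it. For malnormality you characterize a maximal cyclic subgroup as $C=C_G(x)$ for any of its nontrivial elements, deduce that distinct maximal cyclic subgroups meet trivially, and then finish via $N_G(C)=C$ using simplicity. This is cleaner and makes the logical dependencies transparent: completeness is used exactly once (to get $C_G(x)\neq G$) and simplicity exactly once (to get $N_G(C)\neq G$). The paper's version, by contrast, never names simplicity explicitly; it relies only on ``proper $\Rightarrow$ cyclic'' and $Z(G)=1$, at the cost of a slightly more hands-on case analysis. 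Both are short; yours recycles Lemma~\ref{lem:urp} and avoids the $|k|\geq 2$ versus $|k|=1$ split.
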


\begin{proof} Let $\langle z\rangle$ be a maximal cyclic subgroup in
  $G$ and suppose that it is not malnormal, that is $\langle
  z\rangle\cap g^{-1}\langle z\rangle g\neq 1$ for some $g\in
  G\setminus \langle z\rangle$.  Then $z^s=g^{-1}z^tg$ for some
  nonzero $s,t$.  Moreover, the subgroup $\langle g,z\rangle$ is
  larger than $\langle z\rangle$, so it is noncyclic and therefore
  equals $G$.

  If $g^{-1}zg\notin \langle z\rangle$, then $\langle g^{-1}zg,z
  \rangle=G$ and hence $z^s$ lies in the center of $G$, a
  contradiction.

  If $g^{-1}zg\in \langle z\rangle$, then $g^{-1}zg=z^k$ for some
  $k$. If $|k|\geq 2$, then $\langle z\rangle$ is not maximal, a
  contradiction.  If $|k|=1$, then $g^2$ lies in the center of
  $G=\langle g,z\rangle$, again a contradiction.

  Now we prove that $G$ has the unique root property. Suppose that for
  some $x,y\in G$ holds $x^n=y^n$, $n\neq 0$.  If $x,y$ generate a
  cyclic group, then clearly $x=y$.  If they generate a noncyclic
  group, then $\langle x,y\rangle=G$. But then $x^n$ lies in the
  center of $G$, so $x^n=1$, and so $x=1$. Similarly
  $y=1$.
\end{proof}

\begin{thm}[=Theorem~\ref{cor:Z}(2)]\label{thm:example} There exists a
  3-generated group $G=G_1\underset{{u_1=u_2}}{\ast} G_2$ such that
  \begin{enumerate}
  \item $G$ is torsion-free;\label{cor:Z:1}
  \item $G/[G,G]=\Z$ and $u_i\notin [G,G]$;\label{cor:Z:2}
  \item $G$ has the unique root property;\label{cor:Z:3}
  \item $\Comm(G)=\Aut(G)$;\label{cor:Z:4}
  \item $\Aut(G)$ is generated by inner automorphisms, a Dehn twist
    along $\langle u_i\rangle$ and possibly one extra automorphism
    which interchanges $G_1$ and $G_2$. In particular, $\Aut(G)$ is
    finitely generated.\label{cor:Z:5}
  \end{enumerate}
\end{thm}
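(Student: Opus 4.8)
The plan is to construct $G$ explicitly as an amalgamated free product of two copies of Obraztsov's group from Theorem~\ref{thm:obr}, amalgamated over a maximal cyclic subgroup in each factor, and then verify the five listed properties using Lemma~\ref{thm:urp} and elementary Bass--Serre theory.

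\textbf{Construction.} Let $G_0$ be a $2$-generated simple complete torsion-free group as in Theorem~\ref{thm:obr}, and let $G_1,G_2$ be two isomorphic copies of $G_0$. Pick a maximal cyclic subgroup $\langle u_i\rangle\leqslant G_i$ (these exist by the remark after Theorem~\ref{thm:obr}), and set $G=G_1\ast_{u_1=u_2}G_2$. Since $G_1$ is $2$-generated and $G_2$ is generated by $u_2$ together with one more element, and $u_2=u_1\in G_1$, the group $G$ is $3$-generated. First I would record that an amalgamated product of torsion-free groups is torsion-free (any finite-order element is conjugate into a factor), giving~\eqref{cor:Z:1}. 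For~\eqref{cor:Z:2}: each $G_i$ is simple and noncyclic hence perfect, so $G_i=[G_i,G_i]$ and $H_1(G)=H_1(G_1)\ast$-style Mayer--Vietoris gives $H_1(G)\cong(\Z\oplus\Z)/\langle u_1-u_2\rangle\cong\Z$, with the image of $u_i$ a generator (in particular $u_i\notin[G,G]$). For~\eqref{cor:Z:3}, the unique root property: if $x^n=y^n$ with $n\neq0$, use Bass--Serre normal form to locate $x$; if $x^n\neq1$ it is conjugate into a factor, say $x^n\in G_1^g$, so $Z_G(x^n)$ and hence $\langle x,y\rangle$ fixes a point of the tree; conjugating, $\langle x,y\rangle\leqslant G_1$ and we apply the unique root property of $G_1$ from Lemma~\ref{thm:urp}. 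One must handle the degenerate case where $\langle x,y\rangle$ stabilizes an edge (lands in $\langle u_i\rangle$, where roots are obviously unique).

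\textbf{Automorphisms.} For~\eqref{cor:Z:5} the key input is that $\langle u_i\rangle$ is malnormal in $G_i$ (Lemma~\ref{thm:urp}). I would show any $\alpha\in\Aut(G)$ permutes the conjugacy classes of the vertex groups: the $G_i$ are, up to conjugacy, the maximal non-cyclic subgroups of $G$ (by malnormality of the edge group, the Bass--Serre tree has the property that non-cyclic subgroups with a global fixed point are exactly conjugates of the $G_i$, and a non-cyclic subgroup not fixing a point would contain a free subgroup, contradicting that every proper subgroup of $G_i$ is cyclic once one checks the subgroup structure forces a fixed point). So after composing $\alpha$ with an inner automorphism and possibly the swap $\sigma$ exchanging $G_1\leftrightarrow G_2$ (which exists since $G_1\cong G_2$ compatibly with the amalgamation, provided we choose the isomorphism carrying $u_1$ to $u_2$), we may assume $\alpha(G_i)=G_i$ for $i=1,2$. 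Since $G_i$ is complete, $\alpha\restrict{G_i}$ is the restriction of conjugation by some $g_i\in G$; adjusting by a further inner automorphism, $\alpha\restrict{G_1}=\id$, and then $\alpha\restrict{G_2}$ is conjugation by an element centralizing $u_1=u_2$, i.e.\ by a power of $u_2$ (centralizers are cyclic by malnormality). Such an automorphism is exactly a Dehn twist along $\langle u_i\rangle$, raised to a power. Hence $\Aut(G)=\langle\operatorname{Inn}(G),\text{Dehn twist},\sigma\rangle$, which is finitely generated.

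\textbf{Commensurator.} For~\eqref{cor:Z:4}, $\Comm(G)=\Aut(G)$: by Lemma~\ref{lem:aut} and property~\eqref{cor:Z:3} we have $\Aut(G)\hookrightarrow\Comm(G)$, so it suffices to show every finite-index subgroup $H\leqslant G$ equals $G$. Indeed $G=[G,G]$ would be immediate if $G$ were perfect, but $H_1(G)=\Z$, so instead I argue: a proper finite-index subgroup $H$ has a core $N\triangleleft G$ of finite index $>1$, so $G$ has a nontrivial finite quotient $Q$. The image of $G_i$ in $Q$ is a finite quotient of the simple group $G_i$, hence trivial (as $G_i$ is infinite simple); so $Q$ is generated by the images of $G_1,G_2$, both trivial, forcing $Q=1$, a contradiction. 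Therefore $G$ has no proper finite-index subgroup, every element of $\Omega(G)$ is (equivalent to) an automorphism, and $\Comm(G)=\Aut(G)$, completing~\eqref{cor:Z:4} and the theorem.

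\textbf{Main obstacle.} The delicate point is the rigidity step in~\eqref{cor:Z:5}: showing that an arbitrary automorphism of $G$ must preserve the conjugacy classes of the two vertex subgroups. This is where malnormality of the edge group and the peculiar subgroup structure of $G_i$ (every proper subgroup infinite cyclic) are essential, since in general $\Aut$ of an amalgam need not respect the splitting; one typically invokes that the JSR/Bass--Serre decomposition is canonical (acylindricity via malnormality), and I expect the cleanest route is to characterize $\{G_1^g,G_2^g\}$ algebraically — e.g.\ as the maximal subgroups of $G$ that are neither cyclic nor all of $G$ — and then verify that characterization is automorphism-invariant.
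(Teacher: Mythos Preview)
Your construction does not satisfy property~\eqref{cor:Z:2}. You take $G_1,G_2$ to be Obraztsov groups directly and correctly observe that they are perfect; but then the abelianization of $G=G_1\ast_{\langle u\rangle}G_2$ is the pushout of $G_1^{ab}\leftarrow\Z\rightarrow G_2^{ab}$, i.e.\ of $0\leftarrow\Z\rightarrow 0$, which is $0$, not $\Z$. Your Mayer--Vietoris line ``$(\Z\oplus\Z)/\langle u_1-u_2\rangle$'' is simply wrong: the two copies of $\Z$ you wrote down would have to be $H_1(G_1)$ and $H_1(G_2)$, and those vanish. In fact your own argument for~\eqref{cor:Z:4} proves that your $G$ has no nontrivial finite quotient, which is flatly incompatible with $G/[G,G]\cong\Z$; so the proposal is internally contradictory. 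The whole point of the example is to exhibit a group that \emph{does} map onto $\Z$ (echoing the hypotheses of Theorems~\ref{thm:HNN} and~\ref{thm:freeprod}) yet has finitely generated commensurator, so~\eqref{cor:Z:2} is not a throwaway condition.

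The paper repairs this by setting $G_i=H_i\times A_i$ with $A_i=\langle a_i\rangle\cong\Z$ and amalgamating along the diagonal element $u_i=h_ia_i$; then $G_i^{ab}\cong\Z$ and one genuinely gets $G/[G,G]\cong\Z$ with $u_i$ mapping to a generator. The price is that your slick argument for~\eqref{cor:Z:4} collapses: $G$ now has a unique subgroup $G(m)$ of each index $m$, and one must analyse all isomorphisms $G(n)\to G(m)$ and show they force $n=m$ and extend to $G$. This is the real work of the proof and uses the structure of $C_G(u)=\langle u\rangle\times(A_1\ast A_2)$ together with a normal-form argument in the amalgam. A smaller gap: your sketch of~\eqref{cor:Z:3} only treats elliptic $x$ (``$x^n$ is conjugate into a factor'' is false for hyperbolic $x$); in the paper's setup the hyperbolic case needs a separate argument because edge stabilizers are not malnormal once the $A_i$ factors are present.
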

\begin{proof} Let $H_1$, $H_2$ be two groups as in
  Theorem~\ref{thm:obr}. In each $H_i$ we choose an element $h_i$,
  generating a maximal cyclic subgroup. We set $G_i=H_i\times A_i$,
  where $A_i=\langle a_i\rangle$ is an infinite cyclic group, take
  $u_i=h_ia_i$ and define $G=G_1\underset{{u_1=u_2}}{\ast} G_2$.

  We denote by $u$ the image of $u_i$ in $G$. Note that the
  centralizer of the subgroup $\langle u\rangle$ in $G$ has the
  following structure: $C_G(u)=\langle u\rangle \times Z$, where
  $Z=\langle A_1,A_2\rangle$. Since $A_i\cap \langle u_i \rangle =1$,
  we have $Z=A_1\ast A_2\cong F_2$.

  {\it Remark.} Using Lemma~\ref{thm:urp} one can prove the following
  important property: if for some $g\in G$ we have that
  $g^{-1}u^sg=u^t$ for some nonzero $s,t$, then $s=t$ and $g\in
  C_G(u)$.

  We are now ready to prove the statements. \eqref{cor:Z:1} is weaker
  than \eqref{cor:Z:3}.

  \medskip

  \eqref{cor:Z:2} This statement follows from the fact that $H_1,H_2$
  are perfect.

  \medskip

  \eqref{cor:Z:3} Assume the converse: there are two different
  elements $x,y\in G$ such that $x^n=y^n$.  We will analyze the action
  of $x$ and $y$ on the Bass-Serre tree $T$ associated with the
  decomposition $G=G_1\underset{{u_1=u_2}}{\ast} G_2$. Clearly, $x,y$
  are either both elliptic or both hyperbolic.  For any edge $e$ of
  $T$ let $\alpha(e)$ and $\omega(e)$ denote the initial and the
  terminal vertices of $e$ respectively.

  \medskip

  {\it Case 1.} Suppose that $x,y$ are both elliptic. If they
  stabilize the same vertex of $T$, then (after conjugation) we may
  assume that $x,y\in G_i$ for some $i=1,2$. Then, using
  Lemma~\ref{thm:urp}, we conclude $x=y$.

  Suppose that $x$ and $y$ do not stabilize the same vertices of $T$.
  We choose the shortest path $p=e_1e_2\dots e_m$ in $T$ such that
  $x\in \Stab(\alpha(e_1))$ and $y\in \Stab(\omega(e_m))$. Then this
  path is stabilized by $x^n(=y^n)$, in particular, $e_1$ is
  stabilized by $x^n.$ By conjugating and renaming the factors, we can
  assume that $\Stab(\alpha(e_1))=G_1$, $\Stab(\omega(e_1))=G_2$ and
  $\Stab(e_1)=G_1\cap G_2=\langle u\rangle$. Since $x\in G_1$, we have
  $x=za_1^k$ for some $z\in H_1$, $k\in \Z$. And since $x^n\in G_1\cap
  G_2$, we have $x^n=z^na_1^{kn}=u^{kn}=h_1^{kn}a_1^{kn}$. In particular,
  $z^n=h_1^{kn}$ and so $z=h_1^k$ by Lemma~\ref{thm:urp}. This implies
  that $x=h_1^ka_1^k=u_1^k\in G_1\cap G_2=\Stab(e_1)$, a contradiction
  to the minimality of the path $p$.

  \medskip

  {\it Case 2.} Suppose that $x,y$ are both hyperbolic. Since
  $x^n=y^n$, the axes of $x$ and $y$ coincide and $x^{-1}y$ and
  $x^{-2}y^2$ stabilize this axis. By conjugating we may assume that
  $x^{-1}y$ and $x^{-2}y^2$ lie in $G_1\cap G_2$.  Thus $y=xu^k$ for
  some $k\in \Z$ and so $y^2=x^2\cdot x^{-1}u^kxu^k$. Hence
  $x^{-1}u^kx\in G_1\cap G_2$.  By the remark at the beginning of this
  proof, we conclude that $x\in C_G(u)$. Similarly, $y\in
  C_G(u)$. Since $C_G(u)=\langle u\rangle \times Z\cong \langle
  u\rangle \times F_2$ has the unique root property, we conclude from
  $x^n=y^n$ that $x=y$.

  \medskip

  (\ref{cor:Z:4},\ref{cor:Z:5}) First we describe finite index
  subgroups of $G$. Let $B$ be a subgroup of finite index $m$ in $G$,
  and let $N$ be a normal subgroup of finite index in $G$ such that
  $N\leqslant B$. Since $H_i$ does not contain proper finite index
  subgroups, we have $G_i\cap N=(H_i\times \langle a_i\rangle)\cap
  N=H_i\times \langle a_i^{m_i}\rangle$ for some $m_i\in \Z$. Then $N$
  contains the normal closure of $\langle H_1,H_2\rangle$ in $G$. The
  factor group of $G$ by this normal closure is isomorphic to
  $\Z$. Therefore $B$ is normal and coincides with the preimage of
  $m\Z$.

  We claim that $B=(H_1\times \langle
  a_1^m\rangle)\underset{{u_1^m=u_2^m}}{\ast}(H_2\times \langle
  a_2^m\rangle)$.  Simplifying notations we write $G_{i,m}=H_i\times
  \langle a_i^m\rangle$ and
  $G(m)=G_{1,m}\underset{{u_1^m=u_2^m}}{\ast}G_{2,m}$. Thus we want to
  prove that $B=G(m)$.

  It is enough to prove that $G(m)$ is normal in $G$ (then clearly
  $G/G(m)\cong \Z/m\Z$ and so $B=G(m)$).  Note that $G(m)=\langle
  a_1^m,a_2^m,H_1,H_2 \rangle$ and $G=\langle a_1,a_2,H_1,H_2
  \rangle$.  Preparing to conjugate, we deduce from the equations
  $h_1a_1=h_2a_2$ and $[h_i,a_i]=1$ the following:
  $$a_1a_2^{-1}=h_1^{-1}h_2\in H_1H_2\leqslant G(m),$$
  $$a_1^{-1}a_2=h_1h_2^{-1}\in H_1H_2\leqslant G(m).$$
  Then for $\varepsilon\in \{-1,1\}$ we have
  $$\begin{array}{ll}
    a_1^{\varepsilon}a_2^ma_1^{-\varepsilon} & =(a_1^{\varepsilon}a_2^{-\varepsilon})a_2^m(a_1^{\varepsilon}a_2^{-\varepsilon})^{-1}\in G(m),\vspace*{2mm}\\
    a_1^{\varepsilon}H_2a_1^{-\varepsilon} & =(a_1^{\varepsilon}a_2^{-\varepsilon})a_2^{\varepsilon}H_2a_2^{-\varepsilon}(a_1^{\varepsilon}a_2^{-\varepsilon})^{-1}=
    (a_1^{\varepsilon}a_2^{-\varepsilon})H_2(a_1^{\varepsilon}a_2^{-\varepsilon})^{-1}\leqslant G(m).
  \end{array}$$
  By symmetry we get $a_2^{\varepsilon}a_1^ma_2^{-\varepsilon}\in
  G(m)$ and $a_2^{\varepsilon}H_1a_2^{-\varepsilon}\leqslant
  G(m)$. This completes the proof that $G(m)$ is normal in $G$ and so
  $B=G(m)$. Thus, for every natural $m$ there is a unique subgroup of
  index $m$ in $G$; it has the
  form
  \begin{equation}\label{eq:Gm}
    G(m)=G_{1,m}\underset{u_1^m=u_2^m}{\ast}G_{2,m}.\tag{$\#$}
  \end{equation}

  We now investigate which isomorphisms can appear in $\Comm(G)$.  Let
  $n,m$ be two natural numbers and let $\alpha: G(n)\rightarrow G(m)$
  be an isomorphism.  We claim that $G_{i,n}$ is nonsplittable over a
  cyclic subgroup.  Indeed, suppose $G_{i,n}=K\ast_L M$, where $L$ is
  a cyclic group. If one of the indices $\index KL$ or $\index ML$ is
  larger than 2, then $G_{i,n}$ and hence its direct factor $H_i$
  would contain a noncyclic free group, contradicting the properties
  of $H_i$.  If $\index KL=\index ML=2$, then $G_{i,n}\cong \Z/2\Z\ast \Z/2\Z$
  or $G_{i,n}\cong \Z\ast_{2\Z=2\Z}\Z$, again absurd in regard of
  Theorem~\ref{thm:obr}. An analogous reasoning shows that $G_{i,n}$
  cannot be a nontrivial HNN extension over a cyclic group.

  This implies that $\alpha(G_{i,n})$ is also nonsplittable over a
  cyclic subgroup and so is conjugate to $G_{1,m}$ or to $G_{2,m}$.

  \medskip

%It remains to prove that $\Aut(G)$ is finitely generated.

  {\it Case 1.} Suppose that $\alpha(G_{1,n})$ is conjugate to
  $G_{1,m}$ and $\alpha(G_{2,n})$ is conjugate to $G_{2,m}$.
  Composing $\alpha$ with an appropriate conjugation, we can assume
  that $\alpha(G_{1,n})\leqslant G_{1,m}$ and
  $\alpha(G_{2,n})\leqslant gG_{2,m}g^{-1}$ for some $g\in G(m)$. We
  prove that $\alpha(G_{2,n})\leqslant G_{2,m}$.  We can assume that
  $g$, written in reduced form with respect to the amalgamated
  product~\eqref{eq:Gm}, is either empty or starts with an element of
  $G_{2,m}\setminus \langle u^m\rangle$ and ends with an element of
  $G_{1,m}\setminus \langle u^m\rangle$.

  Suppose that $g$ is nonempty and write it in reduced form:
  $g=g_1g_2\dots g_{2k-1}g_{2k}$, where $g_i\in G_{1,m}\setminus
  \langle u^m\rangle$ if $i$ is even and $g_i\in G_{2,m}\setminus
  \langle u^m\rangle$ if $i$ is odd.  The element $\alpha(u^n)$ lies
  in $\alpha(G_{1,n})\cap \alpha(G_{2,n})=G_{1,m}\cap gG_{2,m}g^{-1}$,
  hence it can be written as $\alpha(u^n)=g_1g_2\dots
  g_{2k-1}g_{2k}vg_{2k}^{-1}g_{2k-1}^{-1}\dots g_2^{-1}g_1^{-1}$ for
  some $v\in G_{2,m}$ and the reduced form of this product consists of
  only one factor which lies in $G_{1,m}$. Therefore $v\in \langle
  u^m\rangle$ and $g_i\in C_{G_{2,m}}(u^m)\setminus \langle
  u^m\rangle$ for odd $i$ and $g_i\in C_{G_{1,m}}(u^m)\setminus
  \langle u^m\rangle$ for even $i$. This implies

  (a) $gu^mg^{-1}=u^m$;

  (b) $\alpha(G_{1,m})\cap \alpha(G_{2,m})=\langle u^m\rangle$;

  (c) if $w\in \langle u^m\rangle$, then the reduced form of
  $gwg^{-1}$ with respect to~\eqref{eq:Gm} is $w$;

  (d) if $w\in G_{2,m}\setminus \langle u^m\rangle$, then the reduced
  form of $gwg^{-1}$ is $g_1g_2\dots
  g_{2k-1}g_{2k}wg_{2k}^{-1}g_{2k-1}^{-1}\dots g_2^{-1}g_1^{-1}$; it
  starts and ends with elements from $G_{2,m}\setminus \langle
  u^m\rangle$ and contains at least one element from $G_{1,m}\setminus
  \langle u^m\rangle$.

  \medskip

  Using this we prove that the group generated by $G_{1,m}$ and
  $gG_{2,m}g^{-1}$ does not contain elements of $G_{2,m}\setminus
  \langle u^m\rangle$, and that will contradict the surjectivity of
  $\alpha$. Let $z$ be an arbitrary element of $\langle
  \alpha(G_{1,n}),\alpha(G_{2,n})\rangle$. We write $z$ as
  $z=z_1z_2\dots z_l$, so that $z_i$ lie alternately in
  $\alpha(G_{1,n})$ or in $\alpha(G_{2,n})$ and $l$ is minimal. First
  suppose that $l>1$. Then $z_i\notin \langle u^m\rangle$, otherwise
  one can unify two consecutive factors of $z_1z_2\dots z_l$ and
  decrease $l$. Therefore the following hold:

  (i) If $z_i\in \alpha(G_{1,n})$, then $z_i\in G_{1,n}\setminus
  \langle u^m\rangle$.

  (ii) If $z_i\in \alpha(G_{2,n})$, then $z_i\in g(G_{2,n}\setminus
  \langle u^m\rangle)g^{-1}$ by (a).  By (c--d) the reduced form of
  $z_i$ with respect to~\eqref{eq:Gm} starts and ends with elements
  from $G_{2,m}\setminus \langle u^m\rangle$ and contains at least one
  element from $G_{1,m}\setminus \langle u^m\rangle$.

  Therefore the normal form of $z$ is the product of normal forms of
  $z_i$'s, and so $z\notin G_{2,m}\setminus \langle u^m\rangle$.

  If $l=1$, then either $z\in \langle u^m\rangle$, or as above
  $z\notin G_{2,m}\setminus \langle u^m\rangle$. In both cases
  $z\notin G_{2,m}\setminus \langle u^m\rangle$.

  \medskip

  We have reached a contradiction. Thus $g$ is empty and so
  $\alpha(G_{i,n})\leqslant G_{i,m}$ for $i=1,2$.

  \bigskip

  {\it Case 2.} Suppose that $\alpha(G_{1,n})$ is conjugate to
  $G_{1,m}$ and $\alpha(G_{2,n})$ is also conjugate to
  $G_{1,m}$. Composing $\alpha$ with an appropriate conjugation, we
  can assume that, say, $\alpha(G_{1,n})\leqslant G_{1,m}$ and
  $\alpha(G_{2,n})\leqslant gG_{1,m}g^{-1}$ for some $g\in G(m)$. Then
  arguing as in Case 1 we obtain a contradiction independently of
  whether $g$ is empty or not.

  \medskip

  All other possible cases can be considered similarly. Thus (after an
  appropriate conjugation), we may assume that
  $\alpha(G_{1,n})=G_{1,m}$ and $\alpha(G_{2,n})=G_{2,m}$ or
  $\alpha(G_{1,n})=G_{2,m}$ and $\alpha(G_{2,n})=G_{1,m}$.  In
  particular, $\alpha(u^n)=u^{\varepsilon m}$ for some $\varepsilon\in
  \{-1,1\}$. We consider the first case (the second case is similar).

  Since $H_i$ has no infinite cyclic quotients, we obtain
  $\alpha(H_i)=H_i$. Since $\alpha$ carries the center of $G_{i,n}$ to
  the center of $G_{i,m}$, we have $\alpha(a_i^n)=a_i^{\sigma m}$ for
  some $\sigma\in \{-1,1\}$.  Since $H_i$ is complete,
  $\alpha\restrict{H_i}$ is conjugation by an element $w_i\in
  H_i$. Therefore,
  $\alpha(u^n)=\alpha(h_i^na_i^n)=w_ih_i^nw_i^{-1}a_i^{\sigma m}$.  On
  the other hand $\alpha(u^n)=u^{\varepsilon m}=h_i^{\varepsilon
    m}a_i^{\varepsilon m}$. Thus, we have
  $w_ih_i^nw_i^{-1}=h_i^{\varepsilon m}$ and $\sigma=\varepsilon$.  By
  Lemma~\ref{thm:urp}, $w_i=h_i^{k_i}$ for some $k_i$ and so
  $n=\varepsilon m$, which implies $n=m$ and $\sigma=\varepsilon=1$
  since $m,n\in\mathbb N$.  Then $\alpha\restrict{G_{i,m}}$ is
  conjugation by $w_i$, which is the same as conjugation by
  $h_i^{k_i}a_i^{k_i}=u_i^{k_i}$. Thus, $\alpha$ is a product of two
  Dehn twists.

  All inner automorphisms and Dehn twists, and the (possible)
  permutation of factors of $G(n)$ can be lifted to the corresponding
  automorphisms of $G$. Thus properties~(3) and~(4) are proven.

  \medskip

  Finally we prove that $G$ is 3-generated. Recall that $h_i$
  generates a maximal cyclic subgroup in $H_i$. First we choose an
  element $y_i\in H_i\setminus \langle h_i\rangle$, $i=1,2$, and then
  take a generator $x_i$ of a maximal cyclic subgroup of $H_i$
  containing $y_i$. Clearly, $x_i\in H_i\setminus \langle h_i\rangle$
  and also $h_i\in H_i\setminus \langle x_i\rangle$.

  We claim that the subgroup $F=\langle x_1,x_2,u_1\rangle$ coincides
  with $G$. In the proof we will use the equations
  $h_1a_1=u_1=u_2=h_2a_2$.  We have
  $[x_i,u_i]=[x_i,h_ia_i]=[x_i,h_i]\in H_i$. By Lemma~\ref{thm:urp},
  the subgroup $\langle x_i\rangle$ is malnormal in $H_i$ and so
  $[x_i,h_i]\notin \langle x_i\rangle$.  Then, by
  Theorem~\ref{thm:obr}, $\langle x_i, [x_i,u_i]\rangle=H_i$. In
  particular, $H_i\leqslant F$. Then $A_i=\langle a_i\rangle=\langle
  h_i^{-1}u_i\rangle \leqslant F$ and hence $G=\langle
  H_1,H_2,A_1,A_2\rangle=F$.
\end{proof}

Note that $G$ from the proof of Theorem~\ref{thm:example} cannot be
generated by 2 elements.  Indeed, if $G$ were 2-generated, then its
homomorphic image $H_1\underset{{h_1=h_2}}{\ast} H_2$ would be also
2-generated. But this is impossible in view of~\cite[Corollary~1]{W},
which states that if $B$ is an amalgamated product of type
$\underset{i=1}{\overset{n}{*}_C}B_i$ where $C\neq 1$, $C\neq B_i$,
and $C$ is malnormal in $B$, then $\operatorname{rank}(B)\geq n+1$.

\section{Acknowledgments}

This work started in the ``profinite groups'' conference in
Oberwolfach, in 2008. Conversations with Martin Bridson, Marc
Lackenby, Volodymyr Nekrashevych, and Alexander Ol'shanskij are
gratefully acknowledged.  The second named author thanks the MPIM at
Bonn for its support and excellent working conditions during the fall
2008, while this research was conducted.


\begin{thebibliography}{AA}

\bibitem{BH} M. Bridson and A. Haefliger, \emph{Metric spaces of
    non-positive curvature}, Springer-Verlag, Berlin, 1999.

\bibitem{Boi} M. Boileau and J. Porti, \emph{Geometrization of
    3-orbifolds of cyclic type}, Ast\'erisque No. 272 (2001), 208 pp.

\bibitem{FH} B. Farb and M. Handel, \emph{Commensurations of $\Out(F_n)$},
  Publ. Math. Inst. Hautes \'Etudes Sci. {\bf 105} (2007), 1--48.

\bibitem{Iv} N. Ivanov, \emph{Automorphisms of Complexes of Curves and
    of Teichm\"uller Spaces}, Inter. Math.  Res. Not., No. {\bf 14}
  (1997), 651--666.

\bibitem{LM} C.J. Leininger and D. Margalit, \emph{Abstract
    commensurators of braid groups}, J. Algebra {\bf 299} (2006),
  n.~2, 447--455.

\bibitem{MNS} O. Macedo\'nska, V.V. Nekrashevych and V.I. Sushchansky,
  \emph{Commensurators of groups and reversible automata}, Dopov. Nats.
  Akad. Nauk Ukr. Mat. Prirodozn. Tekh. Nauki {\bf 12} (2000), 36--39.

\bibitem{Ma} G.A. Margulis, \emph{Discrete subgroups of semisimple Lie
    groups}, Springer-Verlag, 1990.

\bibitem{Obr} V.N. Obraztsov, \emph{On infinite complete groups},
  Communications in Algebra, {\bf 22} (1994), 5875--5887.

\bibitem{Olsh} A.Yu. Ol'shanski\u\i, \emph{On the Bass-Lubotzky
    question about quotients of hyperbolic groups}, J. Algebra {\bf
    226} (2000), 807--817.

\bibitem{Pau} F. Paulin, \emph{Outer automorphisms of hyperbolic
    groups and small actions on $\mathbb R$-trees}, Arboreal Group
  Theory (R. C. Alperin, ed.), MSRI Publ., vol. {\bf 19},
  Springer-Verlag, 1991, pp. 331--343.

\bibitem{Ro} C.E. R\"over, \emph{Abstract commensurators of groups
    acting on rooted trees}, Proceedings of the Conference on
  Geometric and Combinatorial Group Theory, Part I (Haifa, 2000).
  Geom. Dedicata {\bf 94} (2002), 45--61.

\bibitem{Se} J.-P. Serre, \emph{Trees}, Springer-Verlag, Berlin, 1980.

\bibitem{W} R.Weidmann. \emph{On the rank of amalgamated products and
    product knot groups}. Math. Ann. {\bf 312} (1999), 761-771.

\bibitem{Zi} R. Zimmer, \emph{Ergodic Theory and Semisimple Groups},
  Monographs in Math., Vol. {\bf 81}, Birkh\"{a}user, 1984.
\end{thebibliography}
\end{document}